\documentclass[a4paper,11pt]{amsart}

\usepackage[french, english]{babel} 
\usepackage[T1]{fontenc}
\usepackage[ansinew]{inputenc}
\usepackage{graphicx,amsmath}

\date{}
\title[Observability estimates on the equilateral triangle]{Observability estimates for the Schr\"odinger equation on the equilateral triangle}

\author{Paul Alphonse}
\address{(Paul \textsc{Alphonse}) Institut de Math\'ematiques de Toulouse, UMR 5219, Universit\'e de Toulouse, CNRS, UPS, F-31062 Toulouse Cedex 9, France}
\email{paul.alphonse@math.univ-toulouse.fr}

\author{David Lafontaine}
\address{(David \textsc{Lafontaine}) CNRS and Institut de Math\'ematiques de Toulouse, UMR 5219, Universit\'e de Toulouse, CNRS, UPS, F-31062 Toulouse Cedex 9, France}
\email{david.lafontaine@math.univ-toulouse.fr}

\keywords{Schr\"odinger equation; observability; tiling}

\makeatletter
	\@namedef{subjclassname@2020}{\textup{2020} Mathematics Subject Classification}
\makeatother

\subjclass[2020]{35Q41, 35Q93, 52C20}

\usepackage[top=3cm, bottom=2cm, left=3cm, right=3cm]{geometry}

\frenchbsetup{StandardLists=true}
\usepackage{enumitem}

\usepackage{array}										

\usepackage{amsfonts}
\usepackage{amsmath}
\usepackage{amsthm}
\usepackage{amssymb}
\usepackage{xfrac}
\usepackage{bbm}

\usepackage{stmaryrd}

\usepackage{ulem}

\usepackage[scr]{rsfso}

\usepackage{comment}

\usepackage[dvipsnames]{xcolor}

\usepackage{hyperref}

\usepackage{tikz}
\usetikzlibrary{calc} 
\usetikzlibrary {arrows.meta}

\hypersetup{	
colorlinks=true,
breaklinks=true,
urlcolor= RedViolet,
linkcolor= red,
citecolor=Blue
}

\numberwithin{equation}{section}
										
\newtheorem{thm}{Theorem}[section]
\newtheorem{prop}[thm]{Proposition}

\newtheorem{lem}[thm]{Lemma}
\newtheorem{cor}[thm]{Corollary}

\newtheorem*{dfn*}{Definition 1.0}
\theoremstyle{definition}
\newtheorem{ex}[thm]{Example}
\newtheorem{rk}[thm]{Remark}

\DeclareMathOperator{\Supp}{Supp}

\DeclareMathOperator{\var}{\mathcal T}
\DeclareMathOperator{\tori}{\mathbb T^2[\omega]}
\DeclareMathOperator{\supp}{supp}
\DeclareMathOperator{\divv}{div}
\DeclareMathOperator{\GL}{GL}

\newcommand{\twotorus}{\mathbb T^2_A}

\begin{document}

\begin{abstract} 
We prove observability estimates for the Schr\"odinger equation 
posed on the equilateral triangle in the plane, under both Neumann and Dirichlet boundary conditions. No geometric control condition is required on the rough localization functions that we consider. This is the first result of this kind on a non-toric domain in the compact setting. Our strategy is to exploit Pinsky's tiling argument to deduce this result from observability estimates on rational twisted tori. These are obtained via propagation of singularities, adapting arguments from Burq and Zworski. The later require Strichartz estimates on such twisted rational tori, that we derive from Zygmund inequalities in the same geometric setting, also providing the sharp constant. Strichartz estimates on the equilateral triangle are also derived from this analysis.
\end{abstract}

\sloppy

\selectlanguage{english}

\maketitle

\section{Introduction and main results}

\subsection{Context and main result}

This work is part of the study of null-controllability of Schr\"odinger equations of the form
\begin{equation}\label{eq:genschro}\tag{$S_{\mathcal M}$}
	\left\{\begin{aligned}
		& i\partial_t u(t,x) + \Delta u(t,x) = a(x)\mathbbm 1_{(0,T)}h(t,x),  & (t,x)\in\mathbb R\times\mathcal M, \\
		& u(0,\cdot) = u_0\in L^2(\mathcal M),
	\end{aligned}\right.
\end{equation}
where $\mathcal M$ is a compact Riemannian manifold, $\Delta$ is the Laplace--Beltrami operator on $\mathcal M$, with Dirichlet or Neumann boundary condition when $\partial M\ne\emptyset$, \textit{i.e.}
\[
    u(t,x) = 0\quad \text{or}\quad \partial_nu(t,x) = 0\quad \text{on $\mathbb R\times\partial M$}, 
\]
and $a\in L^p(\mathcal M)$ is a localization function.
A central problem is to characterize those $a\in L^p(\mathcal M)$ for which null-controllability holds for the equation \eqref{eq:genschro}, \textit{i.e.}~for which, given $u_0 \in L^2(\mathcal M)$, there exists a control  $h$ such that the solution to \eqref{eq:genschro} satisfies $u(T,\cdot) = 0$. This question is deeply related to the geometric properties of the manifold $\mathcal M$. In this very general setting, it is now well-known that when $a = \mathbbm1_{\omega}$ is the indicator function of some open set $\omega\subset\mathcal M$, the equation \eqref{eq:genschro} is null-controllable at any positive time $T>0$ provided $\omega$ verifies the co-called geometric control condition. The latter was introduced in \cite{BLR, RT} for the wave equation, and was first related to the Schr\"odinger equation in the seminal work \cite{L} in the context of boundary control, \textit{i.e.}~with a localization function $a$ supported on the boundary, (see also \cite{Ma, P} for internal results).

While the geometric control condition is necessary and sufficient to ensure the null-controllability of the wave equation \cite{BG}, this is not the case in general for the Schr\"odinger equation. This sufficient geometric condition is necessary in some cases, as for the sphere $\mathcal M = \mathbb S^2$ (for which some spherical harmonics concentrate around the equator) or the disk $\mathcal M = \mathbb D$ (the control support $\omega$ has to intersects the boundary, due to the presence of whispering-gallery modes) \cite{ALM}, and not necessary in some others, such as for the tori $\mathcal M = \mathbb T^d$ with $d\geq1$ \cite{Jaf,K}, or  hyperbolic compact surfaces (more generally, Anosov surfaces) \cite{DJN, J}, where the control support $\omega\subset\mathcal M$ may be taken to be any nonempty subset. 

In the case of one-- and two-- dimensional tori, one can even prove null-controllability with rough localization functions $a\in L^p(\mathbb T^d)$; see, for instance, \cite{AT, BZ, LBM, NWX}. This includes the interesting case where $a = \mathbbm1_{\omega}$ and $\omega\subset\mathbb T^d$ is a measurable set with positive measure. Extending these results to higher dimensional tori $d\geq3$ remains a challenging open problem. Moreover, these are the only two results currently available in the setting of rough control.

In this paper, we investigate a new planar geometric context and prove a null-controllability result with rough localization functions and no geometric control condition in a non-rectangular setting. More precisely, we consider the equilateral triangle
\[
    \var = \big\{(x,y)\in\mathbb R^2 : 0<x<1,\,0<y<x\sqrt{3},\,y<\sqrt{3}(1-x)\big\}.
\]
Our null-controllability result is the following.

\begin{thm} Let $T>0$ be a positive time and $a\in L^{\infty}(\var)\setminus\{0\}$ be a non-negative function. Then, for every initial datum $u_0\in L^2(\var)$, there exists a control $h\in L^2([0,T]\times\var)$ such that the solution $u$ of the equation {\rm (}\hyperref[eq:genschro]{$S_{\var}$}{\rm )} satisfies $u(T,\cdot)=0$.
\end{thm}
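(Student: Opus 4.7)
The plan is to reduce the null-controllability claim to an observability estimate via duality, and then to transport this estimate from a rational twisted torus to the equilateral triangle by a reflection (tiling) argument.

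By the Hilbert Uniqueness Method, the conclusion is equivalent to the observability inequality
\[
    \|v(T,\cdot)\|_{L^2(\var)}^2 \leq C \int_0^T \int_{\var} a(x)|v(t,x)|^2 \, dx \, dt
\]
for every solution $v$ of the free Schr\"odinger equation $i\partial_t v + \Delta v = 0$ on $\var$ with the prescribed (Dirichlet or Neumann) boundary condition. The rest of the argument is devoted to proving this inequality.

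The geometric input is Pinsky's tiling trick. By successively reflecting $\var$ through its three edges one tiles a fundamental domain of a rank-two lattice $L \subset \mathbb{R}^2$ (for instance, a rhombus glued from two copies of $\var$, or a hexagonal fundamental domain built from six copies), whose quotient $\mathbb{T}_L := \mathbb{R}^2/L$ is the ``twisted rational torus'' of the abstract. Any solution $v$ on $\var$ extends, through the alternating (Dirichlet) or symmetric (Neumann) reflection pattern, to a solution $\tilde v$ on $\mathbb{T}_L$, and the localization function $a$ extends to some $\tilde a \in L^{\infty}(\mathbb{T}_L) \setminus \{0\}$ supported in the union of the reflected copies of $\{a>0\}$. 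The target inequality on $\var$ therefore reduces to the analogous observability on $\mathbb{T}_L$ for the pair $(\tilde v, \tilde a)$.

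For observability on $\mathbb{T}_L$ I would adapt the Burq--Zworski strategy: a compactness-and-contradiction argument produces a normalized sequence of counterexamples from which one extracts a semi-classical defect measure $\mu$ that is invariant under the flat geodesic flow on $\mathbb{T}_L$, carries positive mass, and does not charge $\{\tilde a > 0\}$. Since $\tilde a$ is merely bounded and measurable, multiplication by $\tilde a$ cannot be tested directly against $\mu$; this is compensated for by coupling the semi-classical argument with $L^p$ Strichartz estimates on $\mathbb{T}_L$, themselves obtained by a $TT^{\ast}$ argument from the $L^4$ Zygmund inequalities available in this geometric setting.

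The step I expect to be hardest is the Zygmund/Strichartz input on the twisted torus. On the square torus the Zygmund inequality relies on the Jacobi bound for the number of representations of an integer as a sum of two squares; here the analogue involves counting representations by the quadratic form attached to $L$, essentially the norm form of the Eisenstein integers, and extracting the sharp constant announced in the abstract is a delicate arithmetic point. A secondary technical issue will be verifying that the reflection extension is compatible with the time evolution (clear by uniqueness) and with the action of the Laplacian across the edges of $\var$, so that observability on $\mathbb{T}_L$ indeed descends to one on the triangle.
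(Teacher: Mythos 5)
Your architecture matches the paper's: HUM reduces Theorem 1.1 to the observability inequality of Theorem \ref{thm:obstriangle}; Pinsky's reflection argument transports that inequality to a twisted torus (the paper builds extension operators $E^{N,D}$ from Lam\'e's eigenfunctions and uses the Pinsky parallelogram, a fundamental domain of $\tori=\mathbb R^2/3\mathbb Z[\omega]$ consisting of eighteen reflected copies of $\var$, not two or six); and observability on the torus is proved \`a la Burq--Zworski with Strichartz/Zygmund input. The genuine gap is in the step you treat as routine: ``adapt the Burq--Zworski strategy'' to $\twotorus$ with $A$ non-symmetric. Their argument hinges on a dimension reduction along a rational direction $\zeta_0$, and on a twisted torus two obstructions appear that you do not address. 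First, rotating so that $\zeta_0$ aligns with a lattice direction destroys periodicity unless the rotated functions live on a commensurable torus; this already forces a rationality condition on the entries of $A^{-1}(A^{-1})^*$. Second, and more seriously, writing the Laplacian in the lattice basis produces a quadratic form $\psi_A$ with a cross term $\xi\eta$, so the partial Fourier modes $v_n^k$ do \emph{not} solve a free one-dimensional Schr\"odinger equation. The paper removes the cross term by conjugating with $e^{i\alpha kx+i\beta k^2t}$, and the conjugated function is $x$-periodic only because $\alpha\in\mathbb Q$ --- which is exactly the rationality assumption \eqref{eq:rational}, satisfied by $\tori$ since its eigenvalues are $\tfrac{16\pi^2}{27}(m_1^2-m_1m_2+m_2^2)$. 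Without identifying this obstruction your reduction to the known 1D observability lemma does not go through; this is the main new ingredient of the paper and the reason the torus must be rational.

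Two smaller points. The step you single out as hardest --- the Zygmund inequality --- is not where the paper's arithmetic lives: the bound $\|u\|_{L^4}^4\le\frac{3}{\det A}\|u\|_{L^2}^4$ for spectrally localized $u$ is proved by a purely geometric ``parallelogram lemma'' (any quadruple on a circle with $m_1-\overline m_1+m_2-\overline m_2=0$ is a parallelogram or one of two degenerate configurations), with no counting of lattice points on circles; representation numbers enter only to show sharpness of the constant $3$ on the square torus, which is irrelevant for observability. Finally, for $a\in L^\infty$ the implementation of HUM itself is not free: it requires the inhomogeneous $L^4_xL^2_t$ Strichartz estimate \eqref{eq:inhomostritri} (this is why Theorem 1.1 is stated for $a\in L^\infty$ while observability holds for all $a\in L^2$), so your opening duality step should be justified by that estimate rather than asserted.
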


To the best of our knowledge, this is the first null-controllability result for the Schr\"odinger equation with no geometric control condition and rough localization functions on a non-rectangle manifold with boundary. 
Let us mention that null-controllability results with boundary controls for the equation (\hyperref[eq:genschro]{$S_{\var}$}) posed on general triangles (actually, on general simplices) are nevertheless stated in \cite[Theorem 2]{CC}.

According to the Hilbert Uniqueness Method, proving null-controllability for the equation \eqref{eq:genschro} is equivalent to proving the following observability estimate: for every initial datum $u_0\in L^2(\mathcal M)$,
\begin{equation}\label{eq:genobs}
	\Vert u_0\Vert^2_{L^2(\mathcal M)}\le C_{a,T}\int_0^T\int_{\mathcal M}a(x)\big\vert(e^{it\Delta}u_0)(x)\big\vert^2\,\mathrm dx\,\mathrm dt.
\end{equation}
From now on, we will no longer refer to the null-controllability property itself, but only to the correspoding observability estimate. This estimate, which is the main result of this paper, is given in the following theorem,  where $(e^{it\Delta})_{t\in \mathbb R}$ refers to the Schr\"odinger flow on $\mathcal T$ with Dirichlet or Neumann boundary condition.

\begin{thm}\label{thm:obstriangle} 
Let $T>0$ be a positive time and $a\in L^2(\var)\setminus\{0\}$ be a non-negative function. There exists a positive constant $C_{a,T}>0$ such that for every initial datum $u_0\in L^2(\mathcal T)$,
\[
	\Vert u_0\Vert^2_{L^2(\var)}\le C_{a,T}\int_0^T\int_{\var}a(x)\big\vert(e^{it\Delta}u_0)(x)\big\vert^2\,\mathrm dx\,\mathrm dt.
\]
\end{thm}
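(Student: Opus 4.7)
The plan, dictated by the abstract, has three layers: unfold $\var$ onto a flat torus via a Pinsky-type tiling argument, prove observability on that torus by adapting Burq--Zworski, and derive the necessary Strichartz estimates on the torus from sharp Zygmund-type $L^4$ inequalities tailored to this twisted geometry.

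For the unfolding step, reflections of $\var$ across its three sides generate a crystallographic subgroup of the isometries of $\mathbb R^2$, whose translation subgroup is the triangular (Eisenstein) lattice $\Lambda$. A fundamental domain for $\mathbb R^2/\Lambda$ is obtained by assembling finitely many reflected copies of $\var$, and quotienting yields the twisted rational torus $\tori$. Any $u_0\in L^2(\var)$ extends to $\tilde u_0\in L^2(\tori)$ by even (resp.\ odd) reflection across each side, corresponding to Neumann (resp.\ Dirichlet) boundary conditions, and this extension intertwines the two flows: $e^{it\Delta}\tilde u_0$ restricts on each tile to a signed copy of $e^{it\Delta}u_0$. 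Extending $a$ by \emph{even} reflection to $\tilde a\in L^2(\tori)\setminus\{0\}$ preserves non-negativity, and the symmetry of both $\tilde a$ and $|e^{it\Delta}\tilde u_0|^2$ yields
\[
    \|u_0\|^2_{L^2(\var)} = \frac{1}{N}\,\|\tilde u_0\|^2_{L^2(\tori)}, \qquad \int_0^T\!\!\int_{\var} a\,|e^{it\Delta}u_0|^2 = \frac{1}{N}\int_0^T\!\!\int_{\tori} \tilde a\,|e^{it\Delta}\tilde u_0|^2,
\]
where $N$ is the number of copies of $\var$ in the fundamental domain. Hence Theorem~\ref{thm:obstriangle} reduces to an observability estimate on $\tori$, for \emph{every} initial datum in $L^2(\tori)$ and not only the reflection-symmetric ones.

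For the torus observability, I would adapt the Burq--Zworski strategy. A spectral decomposition reduces the problem to a semiclassical observability estimate for data spectrally localized at frequency $1/h$, uniform in $h$. This semiclassical estimate is then obtained via a defect-measure and propagation-of-singularities argument along the geodesic flow on $T^*\tori$, together with a unique continuation statement for joint eigenfunctions of the Laplacian to handle the resulting invariant measures. The crucial quantitative ingredient is a family of Strichartz-type $L^4$ bounds with logarithmic loss on $\tori$, which would be established earlier in the paper from sharp Zygmund inequalities on the same twisted torus.

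The main obstacle is the derivation of these sharp Zygmund-type $L^4$ bounds on $\tori$. On the standard torus $\mathbb T^2$, Zygmund's argument amounts to controlling the number of representations of an integer as a sum of two squares $m^2+n^2$; here one must count instead the representations by the Loeschian quadratic form $m^2+mn+n^2$, i.e.\ the norm form of the Eisenstein integers $\mathbb Z[j]$. Obtaining divisor-type bounds sharp enough to produce the \emph{optimal} Strichartz constant claimed in the abstract is the main arithmetic difficulty. Once this bound is secured, the Burq--Zworski propagation machinery adapts to the twisted geometry, and Pinsky's tiling delivers Theorem~\ref{thm:obstriangle}.
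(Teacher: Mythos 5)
Your overall architecture coincides with the paper's: unfold $\var$ by reflections onto the twisted torus $\tori=\mathbb R^2/3\mathbb Z[\omega]$ using the symmetry of the Lam\'e eigenfunctions, prove observability there by adapting Burq--Zworski, and feed that argument with an $L^4_xL^2_t$ Strichartz estimate coming from a Zygmund inequality on the twisted torus. However, you misidentify the content of the two steps where the twisted geometry actually bites. First, the Zygmund inequality is \emph{not} obtained by counting representations of integers by the Loeschian form $m^2+mn+n^2$; no divisor-type or arithmetic input is needed at all (and a divisor-bound route would produce $\lambda^{\varepsilon}$ losses incompatible with the no-loss, sharp-constant estimate). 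The paper's key lemma is purely geometric: any quadruple $m_1,\overline m_1,m_2,\overline m_2$ of \emph{real} vectors of equal length with $m_1-\overline m_1+m_2-\overline m_2=0$ forms a parallelogram or falls into one of two degenerate pairings, whence the resonant sum in $\|u\|_{L^4}^4$ splits into three copies of $\|u\|_{L^2}^4$ and the constant $3/\det A$ for \emph{any} matrix $A$. The count $|\mathcal S_\lambda|$ only enters the separate optimality discussion. Also, the resulting Strichartz estimate has no logarithmic loss.

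Second, you assert that ``the Burq--Zworski propagation machinery adapts to the twisted geometry'' without addressing the one place where it does not adapt for free, namely the dimension reduction. After rotating so that the selected rational direction $\zeta_0$ is aligned with a lattice vector (which already requires a rationality condition, $q_0\,e_2^{-1}e_1^2\in\mathbb Z e_1+\mathbb Z e_2$, to keep the rotated solution periodic), the partial Fourier modes $v_n^k$ satisfy a $1$D equation governed by the quadratic form $\psi_A(\xi,\eta)$, which for a non-symmetric $A$ carries a cross term $\xi\eta$. One must remove it by the gauge transformation $w_n^k=v_n^k e^{i\alpha kx+i\beta k^2 t}$, and $w_n^k$ lives on a circle $\mathbb R/\mu\mathbb Z$ only if $\alpha\in\mathbb Q$ --- which is exactly what the rationality assumption \eqref{eq:rational} guarantees, and is why the paper's torus result is restricted to rational twisted tori. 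Your proposal never confronts this cross term, so as written the reduction to the one-dimensional observability lemma is incomplete. (A minor further point: the tiling step only requires torus observability for reflection-symmetric data, not for all data; proving it for all data, as the paper does, is sufficient but your phrasing inverts the logical direction.)
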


\begin{rk}\label{rk:stritri} 
While proving Theorem \ref{thm:obstriangle}, we also establish Strichartz estimates on the triangle $\var$, of independent interest: for every solution $u$ of the equation
\[
    \begin{cases}
	    i\partial_t u(t,x) + \Delta u(t,x) = F(t,x),\quad (t,x)\in\mathbb R\times\var, \\
    	u(0,\cdot) = u_0\in L^2(\var).
    \end{cases}
\]
the following estimate holds
\begin{equation}\label{eq:inhomostritri}
	\Vert u\Vert_{L^4(\var,L^2(0, \frac{27}{8\pi}))}
	\le\bigg(\frac{81\sqrt{3}}{32\pi^2}\bigg)^{\frac 14}\big(\Vert u_0\Vert_{L^2(\var)}+ \Vert F\Vert_{L^1((0,\frac{27}{8\pi}),L^2(\var))}\big).
\end{equation}
In the homogeneous case where $F=0$, this estimate ensures the finitness of the right-hand side of the observability estimate stated in Theorem \ref{thm:obstriangle}, and forces to consider rough localization functions $a\in L^2(\var)$.
Moreover, the inhomogeneous estimate is key to implement the Hilbert Uniqueness Method for which \cite[Proposition 4.1]{BZ} holds \textit{mutatis mutandis} provided $a\in L^{\infty}(\var)$ and $h \in L^2([0,T]\times\mathcal T)$. The reason  we cannot implement the Hilbert Uniqueness Method when $a\in L^2(\var)$ (the localization functions considered in Theorem \ref{thm:obstriangle}) is that we do not have at hand the required $L^{4/3}_xL^2_t$-inhomogeneous Strichartz used in \cite[Proposition 4.1]{BZ}.
\end{rk}

\subsection{Strategy of the proof} 
The works \cite{BBZ, BZ} provide a clear strategy for proving observability estimates of the form \eqref{eq:genobs} with rough localization functions on tori, and it would be natural to attempt to extend this approach to the triangle $\mathcal T$. However,
since this strategy relies on semiclassical propagation of singularities, its implementation in our context is \textit{a priori} complicated to perform, due both to the presence of corners and to the more intricate nature of the geodesic flow on $\mathcal T$. 
Yet, we will be able to use a tiling argument due to Pinsky \cite{P2} to 
derive observability estimates on the triangle $\mathcal T$ from observability estimates on an explicit rational twisted torus, taking the form $\mathbb R^2/A\mathbb Z^2$ with $A\in\GL_2(\mathbb R)$ non symmetric. 
The fundamental reason for this trick comes from a previous result by Lam\'e \cite{La}, stating that the (Dirichlet and Neumann) eigenfunctions of the Laplacian $-\Delta$ acting on $L^2(\var)$ enjoy symmetry properties with respect to the sides of the triangle, see Figure \hyperref[fig:sym]{1}. Observability estimates on general rational twisted tori of the form $\mathbb R^2/A\mathbb Z^2$  are in turn obtained by adapting the arguments from \cite{BZ}.
However, the authors of 
\cite{BZ} consider only rectangular tori, and neither their results nor their proofs apply directly to our situation. It is therefore necessary to supplement their arguments to address the case of twisted tori. 
Note that this new argument is valid only for rational twisted tori (which is sufficient to treat the case of the equilateral triangle we are interested in), and the case of the irrational twisted tori remains open. In addition, and of independent interest, we generalize Zygmund inequality, which is required to set up the propagation argument, to twisted tori, taking this opportunity to provide the sharp constant in the rectangular case.

Let us illustrate the tiling argument used to go from the triangle $\mathcal T$ to a twisted torus on the (easy) case of the square with Neumann boundary conditions.

\begin{figure}\label{fig:sym}
    \includegraphics{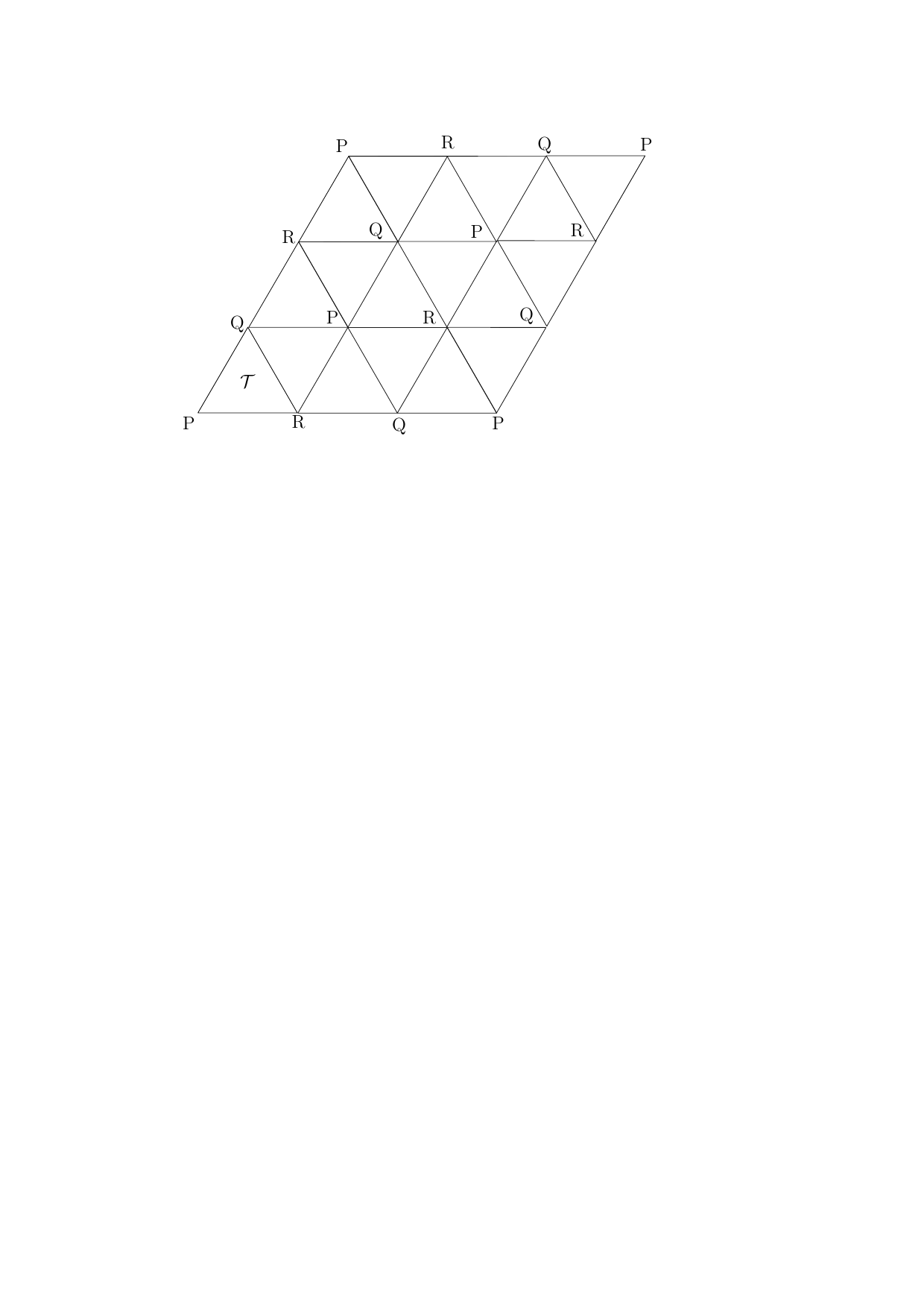}
    \caption{From the triangle $\mathcal T$ to the twisted torus $\mathbb R^2/A\mathbb Z^2$ by successive reflections.}
\end{figure}

\begin{ex} \label{ex:rec} Let us consider the square $\Omega = [0,\pi]\times[0,\pi]$. We recall that the Neumann eigenvalues of the Laplacian $-\Delta$ acting on $L^2(\Omega)$ are given by $\lambda_m = -\vert m\vert^2$, $m\in\mathbb Z^2$, with the following associated eigenfunctions
\[
    f_m(x,y) = \frac2{\pi}\cos(m_1x)\cos(m_1y),\quad m=(m_1,m_2)\in\mathbb Z^2,\,(x,y)\in\Omega.
\]
By extending the eigenfunctions $f_m$ by reflections with respect to the left and the upper sides of the square $\Omega$, one can see that proving an observability estimate for the Schr\"odinger equation on $\Omega$ is equivalent to proving an observability estimate on the torus $\mathbb R^2/(2\pi\mathbb Z)^2$, the latter being available in \cite{BZ}. Naturally, a similar discussion can be held for the Dirichlet eigenfunctions of $-\Delta$, with antireflections instead of reflections, since they are given by products of sines. The very same strategy will be used for the triangle $\mathcal T$, with more sophisticated arguments.
\end{ex}

\subsection{Comments and generalizations}

Some remarks regarding possible generalizations of Theorem \ref{thm:obstriangle} are in order.

A natural question is to wonder to which class of polygonal domains $\Omega\subset\mathbb R^2$ our method applies. If  $\Omega\subset\mathbb R^2$ is such a domain, replicating the tiling argument from Pinsky \cite{P2} shows that it possesses a complete set of trigonometric eigenfunctions (\textit{i.e.}, given by finite trigonometric sums). A result of McCartin \cite{MC} shows that the only such polygonal domains are the rectangle (see Example \ref{ex:rec}), the isosceles right triangle, the equilateral triangle and the hemiequilateral triangle. A slight modification of our arguments 
gives the analog to Theorem \ref{thm:obstriangle} for these domains, while the problem remains open for any other polygonal domain.

Many results in the literature deal not only with the observability properties of the free Schr\"odinger equation, \textit{i.e.}~of the equation \eqref{eq:genschro}, but also of the same equation perturbed by a bounded zero-order potential, see e.g.~\cite{AFM, AM, Bou, BBZ, LBM, Ma}. Among these works, only \cite{LBM} considers both rough localization functions and such perturbations, while the others restrict to 
indicator functions of open sets.
It would 
be very interesting to
extend our result to such perturbations,
 and to prove observability estimates of the form \eqref{eq:genobs} for the operator $\Delta+V$, where $V\in L^{\infty}(\var)$,  on the equilateral triangle $\var$. One of the supplementary ingredients used in  \cite{BZ} to deal with $V$ is a normal form argument  performed  in the reduction dimension. Providing such an argument in the context of twisted tori (which would permit to handle the case of the triangle $\mathcal T$) remains open.

Finally, in Theorem \ref{thm:obstriangle}, we consider localization functions $a\in L^2(\var)$ not depending on time, and a natural extension would be to deal with the case where $a = a(t,x)$. This setting includes the particularly interesting example $a(t,x) = \mathbbm1_{\omega(t)}$, where $\omega(t)\subset\var$ are moving measurable control supports 
of positive measure. There are actually very few results in the literature dealing with this situation. Let us mention the recent paper \cite{NWX} where the authors prove observability estimates for the Schr\"odinger equation on the circle $\mathbb T$ with time-dependent rough localization functions. 

\medskip

\subsubsection*{Structure of the paper}
Section \ref{sec:til} is devoted to present in details the tiling argument that allows us to derive observability estimates on the triangle $\mathcal T$ from observability estimates on a rational twisted torus. In Section \ref{ss:diamond}, we establish an extension of Zygmund inequality to general twisted tori, that moreover provides the sharp constant in the square case. This result plays a key role while establishing observability estimates on general rational twisted tori. Such observability estimates 
are finally obtained in Section~\ref{sec:obstori}. 

\section{Estimates on equilateral triangles through tiling arguments} \label{sec:til}

In this section, we explain how tiling arguments allow to deduce estimates for the Schr\"odinger equation on the equilateral triangle $\mathcal T$ from estimates on a rational twisted torus.

\subsection{Spectral decomposition}
First, we need to recall some spectral properties of the Dirichlet and the Neumann Laplacian on the equilateral triangle. It is known from the works of Lam\'e \cite{La} and Pinsky \cite{P2, P1} that the eigenvalues of $-\Delta$ on $\mathcal T$ are given by
\begin{equation}\label{eq:eigentri}
    \lambda_{m,n} = \frac{16 \pi^2}{27}\big(m^2 + n^2 - mn\big),
\end{equation}
with $m,n \in \mathbb Z$,
where $3\,|\,m+ n$ in the case of Neumann boundary conditions, and $3\,|\,m+ n$, $m\neq 2n$, $n\neq 2m$ in the case of Dirichlet boundary conditions.
The above works also give a complete description of the associated eigenfunctions, that we introduce now. To that end, let
$$
k_{m,n} := \frac{2\pi}{3}\bigg(m, \frac{2n - m}{\sqrt 3}\bigg), \quad
e_{m,n}(x) := \exp({i k_{m,n} \cdot x}),\quad x\in\mathcal T,
$$
and let $S_{m,n}$ be the multiset (\textit{i.e.}, a same element can appear several times)
$$
S_{m,n} = \big\{(m,n), (m, m-n), (-n, m-n), (-n, -m), (n - m, -m), (n-m, n)\big\},
$$
with $|S_{m,n}|=6$.
The $L^2$-normalized Neumann eigenfunctions for the eigenvalue $\lambda_{m,n}$ are given by 
$$
u_{m,n} = \kappa^N_{m,n} \sum_{(m',n')\in S_{m,n}} e_{m',n'},
$$
where $\kappa^{N}_{m,n} > 0$ is a renormalization factor. Observe in particular that, if $S_{m_1, n_1} = S_{m_2, n_2}$, then $u_{m_1, n_1} = u_{m_2, n_2}$.
Dirichlet eigenfunctions involve similar, but alternating, sums. In order to define them precisely, 
introduce
$$
S_1 : (m,n) \to (m, m-n), \quad S_2 : (m,n) \to (n-m, n),
$$
and observe that $S_{m,n}$ is the orbit of $k_{m,n}$ under the action of the transformations $S_1$ and $S_2$:
$$
S_{m,n} = \big\{ h k_{m,n} : h \in \langle S_1, S_2\rangle \big\},
$$
where we denote $\langle S_1, S_2 \rangle$ the group generated by $S_1$ and $S_2$. Let now $\varepsilon : \langle S_1, S_2\rangle \to \{ -1, 1 \}$ be the unique group homomorphism so that
$$
\varepsilon(\operatorname{I}) = 1, \quad \varepsilon(S_1) = -1,\quad \varepsilon(S_2) = -1,
$$
and define
$$
\varepsilon_{m,n}(m', n') := \varepsilon(h) \; \text{ for } \; (m', n') = h(m,n) \in S_{m,n}.
$$
The Dirichlet eigenfunctions are given by
$$
v_{m,n} = \kappa^D_{m,n} \sum_{(m',n')\in S_{m,n}} \varepsilon_{m,n}(m',n') e_{m',n'}.
$$
In other words, the sum is alternating according to the order in the definition of $S_{m,n}$ above. We make choices of sign in the unsigned renormalization factor $\kappa_{m,n}^D$ in such a way that if $S_{m_1, n_1} = S_{m_2, n_2}$, then $v_{m_1, n_1} = v_{m_2, n_2}$. 
Pinsky showed in \cite{P2} that these systems are indeed complete in $L^2(\var)$.
Hence, every function $f \in L^2(\mathcal T)$ decomposes as
\[
    f = \sum_{\substack{(m, n) \in \mathbb Z^2/\sim \\ 3 \,|\,m+n}}  a_{m,n}(f) u_{m,n} 
    = \sum_{\substack{(m, n) \in \mathbb Z^2/\sim \\ 3 \,|\,m+n \\ n\neq 2m,\, m\neq 2n}}  b_{m,n}(f) v_{m,n},
\]
where we defined
\[
    (m_1, n_1) \sim (m_2, n_2) \iff S_{m_1, n_1} = S_{m_2, n_2}.
\]

\subsection{The orbits $S_{m,n}$}
We now aim to understand the orbits $S_{m,n}$ geometrically, \textit{i.e.} in terms of transformations in the plane.
Observe that the scalar product by $k_{m,n}$ is nothing but
\begin{equation}\label{eq:scalk}
    {k_{m,n} \cdot (ae + b \omega) = \frac{2\pi}{3}(ma + nb),}
\end{equation}
where
$$
e := 1 \quad\text{and}\quad \omega := \exp\bigg(\frac{i\pi}{3}\bigg).
$$
The geometric description of $S_{m,n}$ is given by:
\begin{lem}
For every $m,n\in\mathbb Z^2$, we have
\[
    k_{S_1(m,n)} = R_1 k_{m,n}\quad\text{and}\quad k_{S_2(m,n)} = R_2 k_{m,n}, 
\]
where $R_1$ is the reflexion by $\mathbb Re$ and $R_2$ is the reflexion by $\mathbb R\omega$. 
\end{lem}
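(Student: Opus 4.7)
My plan is to reduce the statement to a bilinear identity that can be checked by pairing with a convenient two-element basis of the plane. Since $R_1, R_2$ are orthogonal reflections (hence self-adjoint), for any $z \in \mathbb R^2$ we have $(R_j k_{m,n})\cdot z = k_{m,n} \cdot (R_j z)$. Thus, to prove $k_{S_j(m,n)} = R_j k_{m,n}$ it suffices to check that $k_{S_j(m,n)} \cdot z = k_{m,n} \cdot (R_j z)$ for $z$ ranging over the basis $\{e,\omega\}$, so that by linearity the identity extends to all $z = ae + b\omega$. The formula \eqref{eq:scalk} has been set up precisely to make such computations immediate, since it gives a clean expression of the scalar product with $k_{m,n}$ in the (non-orthogonal) coordinates $(a,b)$.

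The first step is therefore to describe how $R_1$ and $R_2$ act in the basis $\{e,\omega\}$. For $R_1$, the reflection through $\mathbb R e = \mathbb R$, one obviously has $R_1(e) = e$ and $R_1(\omega) = \bar\omega$; the elementary identity $\bar\omega = 1 - \omega = e - \omega$ gives $R_1(ae + b\omega) = (a+b) e - b\omega$. For $R_2$, the reflection through $\mathbb R\omega$, one has $R_2(\omega) = \omega$, and $R_2(e) = \omega^2 = \omega - e$, which can be obtained either from the general formula $z \mapsto e^{2i\pi/3}\bar z$ or from the minimal polynomial $\omega^2 - \omega + 1 = 0$; hence $R_2(ae+b\omega) = -ae + (a+b)\omega$.

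The second step is a direct verification using \eqref{eq:scalk}. For $R_1$,
\[
k_{m,n} \cdot R_1(ae + b\omega) = k_{m,n} \cdot \big((a+b) e - b\omega\big) = \tfrac{2\pi}{3}\big(m(a+b) - nb\big) = \tfrac{2\pi}{3}\big(ma + (m-n)b\big),
\]
which is exactly $k_{S_1(m,n)} \cdot (ae + b\omega) = k_{m,m-n} \cdot (ae+b\omega)$. For $R_2$,
\[
k_{m,n} \cdot R_2(ae + b\omega) = k_{m,n}\cdot\big(-ae + (a+b)\omega\big) = \tfrac{2\pi}{3}\big(-ma + n(a+b)\big) = \tfrac{2\pi}{3}\big((n-m)a + nb\big),
\]
which equals $k_{n-m,n} \cdot (ae+b\omega) = k_{S_2(m,n)}\cdot(ae+b\omega)$. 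Since $z = ae + b\omega$ is arbitrary, both identities follow.

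I do not expect any serious obstacle: the whole lemma is a short coordinate computation, and the only point where a mistake could slip in is the determination of $R_1(\omega)$ and $R_2(e)$. The conceptual content is that \eqref{eq:scalk} exhibits $(m,n)$ as the coordinates of $k_{m,n}$ in the basis dual to $(e,\omega)$ (up to the factor $2\pi/3$), so that the action of $S_1, S_2$ on $(m,n)$ must be read as the transpose of the action of $R_1, R_2$ on $(e,\omega)$, which is precisely what the computation confirms.
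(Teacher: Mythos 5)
Your proof is correct: the adjointness identity $(R_j k)\cdot z = k\cdot(R_j z)$ is valid because orthogonal reflections are self-adjoint, your formulas $R_1(ae+b\omega)=(a+b)e-b\omega$ and $R_2(ae+b\omega)=-ae+(a+b)\omega$ are right, and pairing with the basis $\{e,\omega\}$ via \eqref{eq:scalk} does determine the vectors $k_{S_j(m,n)}$ and $R_jk_{m,n}$ uniquely. Your route differs from the paper's in execution. The paper handles $S_1$ by a direct Cartesian computation ($k_{S_1(m,n)}=\overline{k_{m,n}}$), and for $S_2$ it introduces the dual basis $(e',\omega')$ explicitly, writes $k_{m,n}=\tfrac{2\pi}{3}(me'+n\omega')$, and computes the reflected vector by complex multiplication of $e'$ and $\omega'$. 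You never need the dual basis vectors themselves: the adjointness trick lets you transfer the reflection onto the test vector $z=ae+b\omega$, where \eqref{eq:scalk} applies immediately, so both cases are treated uniformly by the same two-line computation. This is arguably the cleaner presentation --- it isolates exactly the duality you point out at the end (the action of $S_j$ on $(m,n)$ is the transpose of the action of $R_j$ on $(e,\omega)$), and it sidesteps the delicate step of writing $R_2$ as an explicit complex multiplication (note that the reflection about $\mathbb R\omega$ is $z\mapsto\omega^2\bar z$, not $z\mapsto\omega\bar z$; working with the pairing as you do avoids having to get this right). What the paper's version buys in exchange is the explicit formula $k_{m,n}=\tfrac{2\pi}{3}(me'+n\omega')$, which is reused implicitly elsewhere, but as a proof of this lemma your argument is complete and self-contained.
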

\begin{proof}
By checking directly in cartesian coordinates for $S_1$, we get
$$
 k_{S_1({m,n})} = \overline{k_{m,n}} = R_1 k_{m,n}.
$$
To deal with $S_2$, we introduce the dual basis $(e',\omega')$ to $(e,\omega)$
$$
e' = \bigg(1,  - \frac{1}{\sqrt 3}\bigg) = \frac{2}{\sqrt 3}e^{-i\pi/6}, \quad \omega' = \bigg(0, \frac{2}{\sqrt 3}\bigg) =  \frac{2}{\sqrt 3} e^{i\pi/2},
$$
verifying (where $\cdot$ denotes the Euclidean scalar product in $\mathbb R^2$)
$$
e\cdot e' = 1, \quad  \omega' \cdot \omega = 1, \quad e \cdot \omega ' = 0\quad\text{and}\quad e'  \cdot \omega = 0.
$$
Observe in particular that $k_{m,n}$ is given by
$$
k_{m,n} = \frac{2 \pi}{3}(me' + n \omega'),
$$
thus, we have
$$
R_2 k_{m,n} = \omega \overline{k_{m,n}} =  
\omega
R_1 k_{m,n} =\omega k_{m,m-n} = \frac{2\pi}{3}\omega(me'+(m-n)\omega'). 
$$
Moreover, by using that
$$
\omega e' = \frac{2}{\sqrt 3} e^{i\pi/6} = \bigg(1, \frac{1}{\sqrt 3}\bigg), \hspace{0.3cm}\omega\omega' = \frac{2}{\sqrt 3}e^{i5\pi/6} = \bigg(-1, \frac{1}{\sqrt{3}}\bigg),
$$
we finally get
\[
    R_1 k_{m,n}  = k_{n-m,n} =  k_{S_2({m,n})},
\]
as expected.
\end{proof}

\subsection{From equilateral triangle to twisted torus}\label{subsec:tritotor}

Let us explain how to reduce our problem to an observability estimate on the twisted torus
\begin{equation}\label{eq:partitorus}
    \mathbb T^2[\omega] := \mathbb R^2 / 3\mathbb Z[\omega],
\end{equation}
where $\mathbb Z[\omega]\subset\mathbb R^2$ is defined by
\[
    \mathbb Z[\omega] = \big\{a+\omega b : a,b\in\mathbb Z\big\}.
\]
To that end, let $G$ be the group generated by the reflexions $R_1$ and $R_2$, that is,
$$
G := \langle R_1, R_2 \rangle.
$$
Let us also denote by ${\mathcal H}_1$ the hexagon, orbit of $T$ under the action of $G$
$$
{\mathcal H}_1 := \bigcup_{g \in G} g \mathcal T,
$$
(this is thus the adjoint orbit to $S_{n,m}$), and let further
$$
{\mathcal H}_2 := {\mathcal H}_1 + e + \omega\quad\text{and}\quad {\mathcal H}_3 := {\mathcal H}_1 + 2e + 2\omega.
$$
Introducing now the Pinsky parallelogram
$$
\mathcal P := \big\{ a + \omega b : 0<a,b<3 \big\},
$$
which is a fundamental domain of the torus $\mathbb T^2[\omega]$, observe that
\[
    \mathcal P = {\mathcal H}_1 \cup {\mathcal H}_2 \cup {\mathcal H}_3 \hspace{0.3cm}\text{ in }\mathbb R^2 \slash 3\mathbb Z[\omega],
\]
as illustrated in Figure \hyperref[fig:pinsky]{2}. 
\begin{figure}\label{fig:pinsky}
\includegraphics{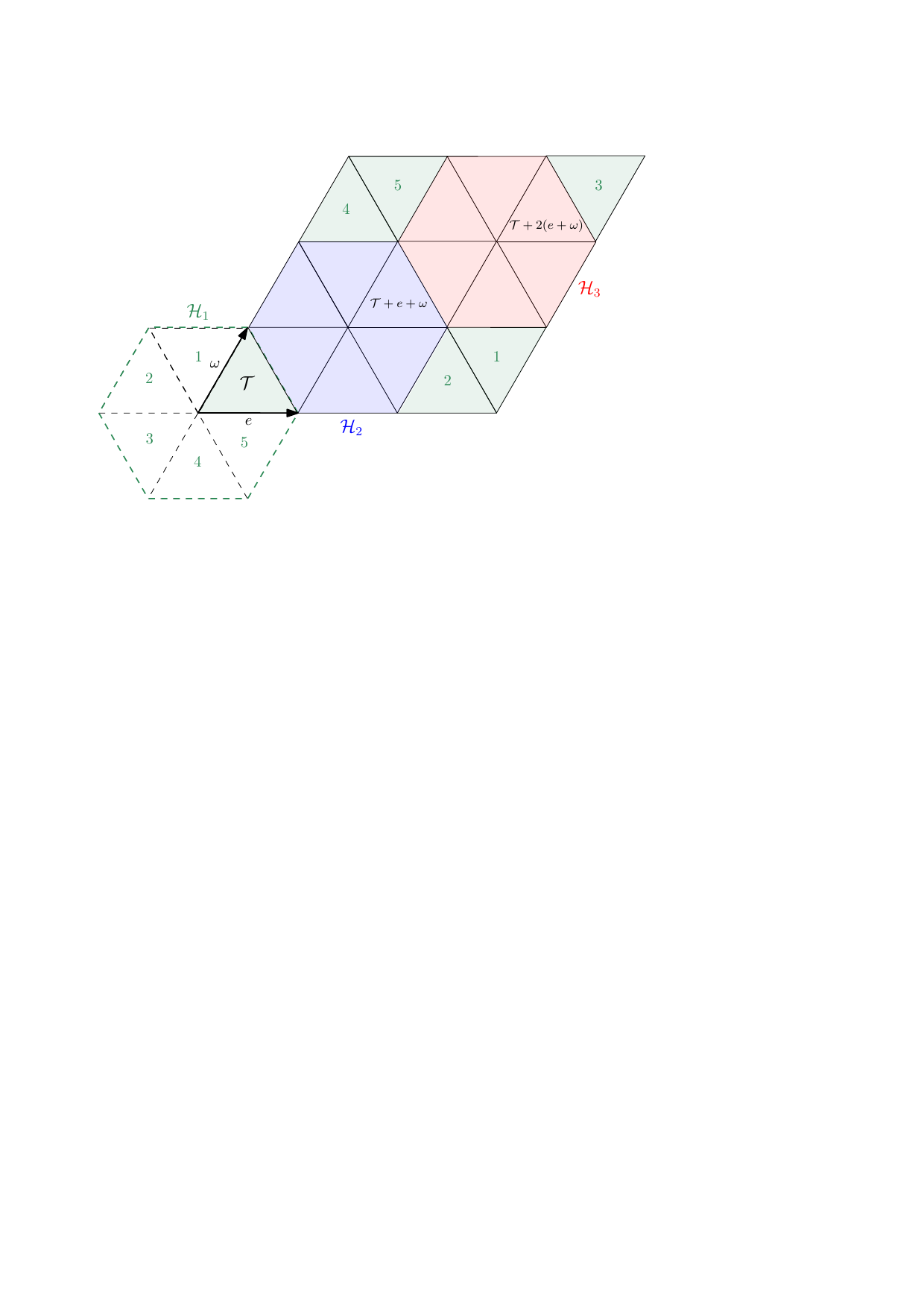}
 \caption{The parallelogram $\mathcal P$ as $18$ copies of $\mathcal T$ and its reflections, and its decomposition in the three hexagons $\mathcal {\mathcal H}_1$, $\mathcal {\mathcal H}_2$, $\mathcal {\mathcal H}_3$. The triangles with the same number are the same in $\mathbb R^2 / 3\mathbb Z[\omega]$.}
\end{figure}
We begin with a lemma quantifying the periodicity of the functions $u_{m,n}$.

\begin{lem} \label{lem:inv_uv}
Let $g \in G$, $p\in \mathbb Z$, and $m, n \in \mathbb Z$ so that $3 \, | \, m+n$.  We have for every $x\in\var$
$$
u_{m,n}(x) = u_{m,n}( gx+p(e+\omega)), \quad v_{m,n}(x) = \varepsilon(g) v_{m,n}( gx+p(e+\omega)).
$$
\end{lem}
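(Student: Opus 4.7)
The plan is to decompose the invariance statement into two independent parts---translation by $p(e+\omega)$ and the action of $g \in G$---and then combine them, relying on the elementary identity $k \cdot gx = (g^{-1}k) \cdot x$ valid for the orthogonal maps $g \in G$.

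For the translation part, formula \eqref{eq:scalk} gives $k_{m',n'} \cdot (e+\omega) = \frac{2\pi}{3}(m'+n')$, so
$$e_{m',n'}\bigl(x + p(e+\omega)\bigr) = \exp\!\Bigl(\tfrac{2\pi i p}{3}(m'+n')\Bigr)\, e_{m',n'}(x),$$
which equals $e_{m',n'}(x)$ as soon as $3 \mid m'+n'$. The key sub-step I would check is that the divisibility $3\mid m+n$ propagates through the whole orbit $S_{m,n}$: indeed, $S_1(m,n)=(m,m-n)$ has component sum $2m-n\equiv -(m+n) \pmod 3$, and the analogue holds for $S_2$, so that every $(m',n')\in S_{m,n}$ satisfies $3\mid m'+n'$. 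Consequently each plane wave $e_{m',n'}$ appearing in the definitions of $u_{m,n}$ and $v_{m,n}$ is individually $p(e+\omega)$-periodic, and hence so are $u_{m,n}$ and $v_{m,n}$ themselves.

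For the action of $g\in G$, I would invoke the previous lemma and argue by induction on the word length of $g$ in the generators $R_1, R_2$ to conclude that $g^{-1}$ acts on the family $\{k_{m',n'} : (m',n')\in S_{m,n}\}$ via the corresponding element $\tilde g \in \langle S_1, S_2\rangle$, which permutes the orbit $S_{m,n}$. Re-indexing the sum defining $u_{m,n}$ then immediately yields $u_{m,n}(gx)=u_{m,n}(x)$. For $v_{m,n}$, the same re-indexing transports the sign $\varepsilon_{m,n}(m',n')=\varepsilon(h)$ (where $(m',n')=h(m,n)$) into $\varepsilon(\tilde g h)=\varepsilon(\tilde g)\varepsilon(h)$, factoring out the global constant $\varepsilon(\tilde g)$; identifying $\varepsilon(g):=\varepsilon(\tilde g)$ via the isomorphism $G\simeq\langle S_1, S_2\rangle$ obtained by $R_i\leftrightarrow S_i$, we obtain $v_{m,n}(gx)=\varepsilon(g)\, v_{m,n}(x)$, i.e.~$v_{m,n}(x)=\varepsilon(g)\, v_{m,n}(gx)$. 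Composing with the translation invariance from the previous step gives the statement in its full form.

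The proof is essentially algebraic bookkeeping, and the only potential subtleties---which are really the ``main obstacles''---are the elementary arithmetic check that the congruence $3 \mid m'+n'$ is stable along the orbit, and, in the Dirichlet case, the careful verification that the signs introduced by $\varepsilon$ combine multiplicatively under the re-indexing, a point that is guaranteed by the fact that $\varepsilon$ is a homomorphism on the whole group $\langle S_1, S_2\rangle$.
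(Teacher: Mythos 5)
Your proposal is correct and follows essentially the same route as the paper: invariance under $g\in G$ by re-indexing the orbit sum via the correspondence $R_i\leftrightarrow S_i$ (with the homomorphism $\varepsilon$ producing the global sign in the Dirichlet case), and $(e+\omega)$-periodicity of each $e_{m',n'}$ from \eqref{eq:scalk} together with the stability of $3\mid m'+n'$ along the orbit. Your explicit check of that divisibility and of the multiplicativity of the signs only spells out steps the paper leaves implicit.
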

\begin{proof}
Recall that
$$
u_{m,n}(x) = \kappa^N_{m,n} \sum_{g' \in G} \exp{(ig'k_{m,n}\cdot x)},
$$
hence, by a change of variable in the sum
\begin{align*}
u_{m,n}(gx) &= \kappa^N_{m,n} \sum_{g' \in G} \exp{(ig'k_{m,n}\cdot gx)} =  \kappa^N_{m,n} \sum_{g' \in G} \exp{(ik_{m,n}\cdot g'^{-1}gx)}\\ &=  \kappa^N_{m,n} \sum_{g' \in G} \exp{(ik_{m,n}\cdot g'x)} = u_{m,n}(x),
\end{align*}
and similarly for $v_{n,m}$. The additional remark that the $e_{m',n'}(x) = \exp{(ik_{m',n'}\cdot x)} $ are $(e+\omega)$--periodic for $3 \, | \, m'+n'$ (and hence for $(m',n') \in S_{m,n}$ with $3 \, |\, m+n$), see \eqref{eq:scalk}, gives the result.
\end{proof}

Let us define the following extension operators
$$
E^N f := \sum_{\substack{(m, n) \in \mathbb Z^2/\sim \\ 3 \,|\,m+n}}  a_{m,n}(f) u_{m,n}, \quad
E^D f := \sum_{\substack{(m, n) \in \mathbb Z^2/\sim \\ 3 \,|\,m+n \\ n\neq 2m,\, m\neq 2n}}  b_{m,n}(f) v_{m,n},\quad
f \in L^2(\mathcal T).
$$
The operators $E^{N,D}$ satisfy the following properties.

\begin{prop} \label{prop:TtoTor}
For every $f \in L^2(\mathcal T)$, the functions $E^{N,D}f$ are well-defined as functions in $L^2(\mathbb T^2[\omega])$. Moreover, we have
$$
E^{N,D} \Delta_{N,D} f = \Delta E^{N,D} f,
$$
and in addition, 
$$
\forall g \in G, \; p \in \mathbb Z, \; \text{a.e.} \; x \in \mathcal T, \quad 
\begin{cases}
(E^{N}f)(g x + p(e+\omega)) = f(x), \\[5pt]
(E^{D}f)(g x + p(e+\omega)) = \varepsilon(g) f(x).
\end{cases}
$$
\end{prop}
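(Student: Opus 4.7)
The plan is to verify the three assertions in order, relying on Lemma~\ref{lem:inv_uv} and on Fourier orthogonality on $\mathbb T^2[\omega]$. First, I would observe that by \eqref{eq:scalk} one has $k_{m,n}\cdot 3e = 2\pi m$ and $k_{m,n}\cdot 3\omega = 2\pi n$, so each exponential $e_{m,n}$ is $3\mathbb Z[\omega]$-periodic and descends to a smooth function on $\mathbb T^2[\omega]$. Moreover, $\{k_{m,n} : (m,n)\in\mathbb Z^2\}$ is exactly the dual lattice of $3\mathbb Z[\omega]$, so $(e_{m,n})_{(m,n)\in\mathbb Z^2}$ is an orthogonal basis of $L^2(\mathbb T^2[\omega])$. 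It follows that the $u_{m,n}$ and $v_{m,n}$ themselves descend to smooth functions on the torus, and, since distinct classes $[m_1,n_1]\ne[m_2,n_2]$ yield disjoint orbits $S_{m_1,n_1}\cap S_{m_2,n_2}=\emptyset$, the families $(u_{m,n})$ and $(v_{m,n})$ indexed by $\mathbb Z^2/\sim$ are orthogonal systems in $L^2(\mathbb T^2[\omega])$.

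For the well-definedness of $E^{N,D}f$, the key step is to compute $\|u_{m,n}\|_{L^2(\mathbb T^2[\omega])}$ by exploiting the tiling of $\mathcal P$ by $18$ isometric copies of $\mathcal T$. Up to measure zero, $\mathcal P = \mathcal H_1\cup\mathcal H_2\cup\mathcal H_3$, each $\mathcal H_i$ is a translate of the hexagon $\bigcup_{g\in G} g\mathcal T$ by a multiple of $e+\omega$, and each hexagon is itself the union of the $6$ reflected copies $\{g\mathcal T : g\in G\}$. Applying Lemma~\ref{lem:inv_uv} to $|u_{m,n}|^2$ on each of these $18$ pieces yields
\[
\|u_{m,n}\|_{L^2(\mathbb T^2[\omega])}^2 = 18\,\|u_{m,n}\|_{L^2(\mathcal T)}^2 = 18,
\]
and the identical computation for $|v_{m,n}|^2$ (the signs $\varepsilon(g)$ being annihilated by the modulus) gives $\|v_{m,n}\|_{L^2(\mathbb T^2[\omega])}^2=18$. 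Combined with the orthogonality established above and Parseval's identity on $\mathcal T$, this produces
\[
\|E^{N,D}f\|_{L^2(\mathbb T^2[\omega])}^2 = 18\,\|f\|_{L^2(\mathcal T)}^2,\qquad f\in L^2(\mathcal T),
\]
which proves that the defining series converges in $L^2(\mathbb T^2[\omega])$.

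The invariance properties are then obtained by passing to the $L^2$-limit in the partial sums and applying Lemma~\ref{lem:inv_uv} term by term. For the intertwining of the Laplacians, I would note that each $u_{m,n}$ (resp.\ $v_{m,n}$) is a finite sum of exponentials, hence a smooth eigenfunction of $\Delta$ on $\mathbb R^2$ with eigenvalue $\lambda_{m,n}$. Taking $f\in D(\Delta_{N,D})$, the identity above applied to $\Delta_{N,D}f\in L^2(\mathcal T)$ ensures that $\sum \lambda_{m,n}a_{m,n}(f)u_{m,n}$ converges in $L^2(\mathbb T^2[\omega])$ and equals both $E^N(\Delta_N f)$ (by definition) and $\Delta(E^N f)$ (by termwise action of $\Delta$ on the Fourier series, justified by the $L^2$-convergence), and analogously in the Dirichlet case. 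The main obstacle is really the norm identity $\|u_{m,n}\|_{L^2(\mathbb T^2[\omega])}^2 = 18$; once the $18$-fold tiling of $\mathcal P$ by reflected and $(e+\omega)$-translated copies of $\mathcal T$ is made precise, all the remaining assertions reduce to formal manipulation of the spectral decomposition.
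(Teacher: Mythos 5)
Your proof is correct and follows essentially the same route as the paper's: periodicity of the exponentials for descent to the torus, the eigenfunction property for the intertwining, and Lemma \ref{lem:inv_uv} for the invariance. You are in fact more careful than the paper on one point, namely the $L^2(\mathbb T^2[\omega])$-convergence of the defining series, which you obtain from the disjointness of the orbits and the norm identity $\Vert E^{N,D}f\Vert^2_{L^2(\mathbb T^2[\omega])}=18\,\Vert f\Vert^2_{L^2(\mathcal T)}$ (essentially Corollary \ref{cor:red_twist}, which the paper only deduces afterwards), whereas the paper's proof justifies well-definedness by the periodicity of each term alone.
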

\begin{proof}
First, observe that $E^{N,D} f$ are well-defined as functions on $\tori$, because the $e_{m',n'}$, and hence each $u_{m,n}$ (resp $v_{m,n}$) are $3e$ and $3\omega$ periodic.
To get the first claim, it suffices to check that, for $3 \, | \, m+n$,
$$
\Delta u_{m,n}(x) = \lambda_{m,n}  u_{m,n}(x), \quad x \in \mathbb R^2,
$$
(and similarly for $v_{m,n}$) which follows from the fact that this is verified in $\mathcal T$, together with Lemma \ref{lem:inv_uv} and the fact that the Laplacian commutes with reflexions and translations.
The second claim follows directly from Lemma \ref{lem:inv_uv}.
\end{proof}

Recalling the decomposition
$$
\mathcal P = \mathcal H_1 \cup \mathcal H_2 \cup \mathcal H_3 = \bigcup_{\substack{g \in G, \\ p =0,1,2}} g\mathcal T + p(e + \omega),
$$
we directly deduce from Proposition \ref{prop:TtoTor} the following.
\begin{cor} \label{cor:red_twist}
For any $f\in L^2(\var)$ and $p\geq 1$
$$
\Vert E^{N,D}f \Vert^p_{L^p(\tori)} = 18 \Vert f \Vert^p_{L^p(\mathcal T)},
$$
and for any $b \in L^2(\mathcal T)$ such that $b \geq 0$,
$$
\int_{\tori} |E^{N,D} b| |E^{N,D} f | ^2 = 18 \int_{\mathcal T} b |f|^2.
$$
\end{cor}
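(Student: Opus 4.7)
The plan is to reduce both identities to the decomposition of the fundamental domain $\mathcal P$ of $\tori$ into the essentially disjoint union of $18$ isometric copies of $\var$, and then to invoke the pointwise identity provided by Proposition \ref{prop:TtoTor} on each copy.

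For the first identity, I would begin by using that $\mathcal P$ is a fundamental domain to write
$$
\Vert E^{N,D}f\Vert_{L^p(\tori)}^p = \int_{\mathcal P}|E^{N,D}f(y)|^p\,\mathrm dy,
$$
and then split $\mathcal P$ along the tiling
$$
\mathcal P = \bigcup_{\substack{g\in G\\ p=0,1,2}}\bigl(g\mathcal T + p(e+\omega)\bigr)
$$
recalled just before the statement (and displayed in Figure \hyperref[fig:pinsky]{2}). On each piece, I would perform the change of variables $y=gx+p(e+\omega)$ with $x\in\var$. Since $g\in G$ is an isometry of $\mathbb R^2$, the Jacobian is $1$; and by the last assertion of Proposition \ref{prop:TtoTor}, $|E^{N}f(y)|=|f(x)|$ and $|E^{D}f(y)|=|\varepsilon(g)|\,|f(x)|=|f(x)|$, the latter because $\varepsilon(g)\in\{-1,+1\}$. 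Summing the $18$ resulting contributions yields exactly $18\,\Vert f\Vert_{L^p(\var)}^p$.

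The second identity is obtained by the same template: I would expand
$$
\int_{\tori}|E^{N,D}b|\,|E^{N,D}f|^2 = \sum_{\substack{g\in G\\ p=0,1,2}}\int_{g\mathcal T + p(e+\omega)}|E^{N,D}b(y)|\,|E^{N,D}f(y)|^2\,\mathrm dy,
$$
and apply the same change of variables on each of the $18$ pieces. The signs $\varepsilon(g)$ again disappear under the absolute values, and since $b\geq 0$ we have $|E^{N,D}b(y)|=b(x)$, so each piece contributes $\int_{\var}b|f|^2$, producing the factor $18$.

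The only point deserving care is the verification that the $18$ translated and reflected copies of $\var$ indeed cover $\mathcal P$ with pairwise measure-zero overlap; this is precisely the geometric content of the Pinsky tiling and has already been explained in Section~\ref{subsec:tritotor}, so no real obstacle remains. Once this is granted, everything else is additivity of the integral, invariance of Lebesgue measure under reflections and translations, and the pointwise equivariance of $E^{N,D}f$.
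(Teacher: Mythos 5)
Your proposal is correct and is essentially the paper's own argument: the paper deduces the corollary directly from the tiling $\mathcal P = \bigcup_{g\in G,\, p=0,1,2}\bigl(g\mathcal T + p(e+\omega)\bigr)$ into $18$ isometric copies of $\var$ together with the pointwise equivariance of $E^{N,D}$ from Proposition \ref{prop:TtoTor}, exactly as you do. You merely spell out the change of variables and the disappearance of the signs $\varepsilon(g)$ under the absolute values, which the paper leaves implicit.
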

As a consequence of Corollary \ref{cor:red_twist}, proving the observability estimates from Theorem~\ref{thm:obstriangle} on the equilateral triangle $\mathcal T$ is equivalent to proving the following estimates on $\mathbb T^2[\omega]$ for every positive time $T>0$, $a\in E^{N,D}(L^2(\var))$ and $u_0\in E^{N,D}(L^2(\var))$,
\begin{equation}\label{eq:twistedobs}
    \Vert u_0\Vert_{L^2(\tori)}\le C_{a,T}\int_0^T\int_{\tori}
    a(x)\big\vert(e^{it\Delta}u_0)(x)\big\vert^2\,\mathrm dx\,\mathrm dt.
\end{equation}
In Section \ref{sec:obstori}, we will even be able to prove these observability estimates for every rough control $a\in L^2(\tori)$ and for every initial datum $u_0\in L^2(\mathbb T^2[\omega])$. In fact, we will establish \eqref{eq:twistedobs} on very general rational twisted tori (defined in the next section).

Naturally, this strategy is not specific to observability estimates, and can be applied for various kind of estimates, as the Strichartz estimate stated in Remark \ref{rk:stritri} (see Remark~\ref{rk:striinhomo}), and for other equations involving the Laplace operator.

\section{Sharp Zygmund inequality} \label{ss:diamond}

In the seminal work \cite{Z}, answering a question by Fefferman, Zygmund proved the following $L^4$ inequality for functions $u\in L^2(\mathbb T^2)$ with Fourier transforms $\widehat u$ supported in circles $\mathcal S_{\lambda}\subset\mathbb Z^2$,
\begin{equation}\label{eq:zygnonopti}
    \Vert u\Vert^4_{L^4(\mathbb T^2)}\le C\Vert u\Vert^4_{L^2(\mathbb T^2)},
\end{equation}
where $\mathbb T^2 = \mathbb R^2/\mathbb Z^2$, and the circles $\mathcal S_{\lambda}$ are defined for every $\lambda>0$ by
\[
    \mathcal S_{\lambda} := \big\{m\in\mathbb Z^2 : \vert m\vert^2 = \lambda\big\}.
\]
This kind of estimate, fundamental in harmonic analysis, also turns out to play a key role in controllability theory of dispersive equations, while proving observability estimates like \eqref{eq:twistedobs}. This will be detailed in Section \ref{sec:obstori}. Zygmund's original argument yields the inequality \eqref{eq:zygnonopti} with $C=5$ \cite[Theorem 1]{Z}. The aim of this section is 
two-fold: we extend \eqref{eq:zygnonopti} to twisted tori (which generalize $\tori$ and will be defined below), while giving another proof of \eqref{eq:zygnonopti} that yields the sharp constant $C=3$. 
We now define general twisted tori. Given $A\in \GL_2(\mathbb R)$, we associate to $A$ the following torus
\[
    \twotorus := \mathbb R^2 /(A\mathbb Z^2).
\]
Recall that the spectrum of the Laplace operator $-\Delta$ acting on functions in $L^2(\mathbb T^2_A)$ is composed of the following eigenvalues
\begin{equation}\label{eq:eigenvalues}
    \lambda_m = \vert 2\pi m\vert^2,\quad m\in\mathbb Z^2_A:=(A^{-1})^*\mathbb Z^2,
\end{equation}
where $(A^{-1})^*$ denotes the transpose of $A^{-1}$ and that each eigenvalue $\lambda_m>0$ is associated with the following normalized eigenfunction
\[
    e_m(x) = \frac{1}{\sqrt{\det A}}\,e^{2i\pi m\cdot x},\quad m\in\mathbb Z^2_A,\, x\in\twotorus.
\]
In this context, the natural energy levels are the sets $\mathcal S_{\lambda}(A)$ defined for every $\lambda>0$ by (we get rid of the factor $4\pi^2$, as for the circles $\mathcal S_{\lambda}$)
\[
    \mathcal S_{\lambda}(A) := \big\{m\in\mathbb Z^2_A : \vert m\vert^2 = \lambda\big\}.
\]
The generalization of Zygmund inequality \eqref{eq:zygnonopti} is stated in the following result.

\begin{thm}\label{thm:zyg} For every energy level $\lambda>0$ and for every function $u\in L^2(\twotorus)$ such that $\supp\widehat{u}\subset\mathcal S_{\lambda}(A)$, the following estimate holds
\[
    \Vert u\Vert^4_{L^4(\twotorus)}\le\frac{3}{\det A}\Vert u\Vert^4_{L^2(\twotorus)}.
\]
Moreover, the above constant $3$ is sharp in the case where $A = I_2$, in the sense that there exists a sequence $(\lambda_n)_n$ of positive integers and a sequence $(u_n)_n$ in $L^2(\mathbb T^2)$ such that
\[
    \supp\widehat{u_n}\subset\mathcal S_{\lambda_n},\quad
    \Vert u_n\Vert^4_{L^2(\mathbb T^2)} = 1\quad\text{and}\quad\Vert u_n\Vert^4_{L^4(\mathbb T^2)} \underset{n\rightarrow+\infty}{\longrightarrow}3.
\]
\end{thm}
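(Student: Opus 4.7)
The plan is Fourier-analytic, following the classical Zygmund strategy: expand $u$ in the orthonormal eigenbasis $\{e_m\}_{m\in\mathbb Z^2_A}$, apply Plancherel to rewrite $\Vert u\Vert_{L^4}^4=\Vert |u|^2\Vert_{L^2}^2$ as a sum of squares of ``difference Fourier coefficients'', and control this sum via a lattice-point counting argument. The novelty is twofold: one must correctly track the $\det A$ normalization in the twisted setting, and one must replace the naive counting bound by a sharp geometric bound in order to recover the optimal constant $3$ rather than Zygmund's $5$.

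\textbf{Step 1: the main inequality.} Writing $u=\sum_{m\in\mathcal S_\lambda(A)}c_m e_m$, a direct computation using $e_m\bar e_n=(\det A)^{-1/2}e_{m-n}$ yields
\[
|u|^2=\frac{1}{\sqrt{\det A}}\sum_{k\in\mathbb Z^2_A}\alpha_k\,e_k,
\qquad
\alpha_k:=\sum_{\substack{m,n\in\mathcal S_\lambda(A)\\ m-n=k}}c_m\bar c_n,
\]
so that Plancherel gives $\Vert u\Vert_{L^4(\twotorus)}^4=(\det A)^{-1}\sum_k|\alpha_k|^2$. The $k=0$ term equals exactly $(\det A)^{-1}\Vert u\Vert_{L^2}^4$. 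For $k\neq 0$, the key geometric fact is that
\[
r_A(k):=\#\bigl\{(m,n)\in\mathcal S_\lambda(A)^2:m-n=k\bigr\}\le 2,
\]
because $|m|^2=|m-k|^2=\lambda$ forces $m$ to lie on the line orthogonal to $k$ through $k/2$, which meets the circle $\{|m|^2=\lambda\}$ in at most two points. Cauchy--Schwarz applied for each fixed $k\neq 0$, followed by summation, gives
\[
\sum_{k\neq 0}|\alpha_k|^2\le 2\!\!\sum_{\substack{m,n\in\mathcal S_\lambda(A)\\ m\neq n}}\!\!|c_m|^2|c_n|^2\le 2\Vert u\Vert_{L^2}^4,
\]
and combining with the $k=0$ contribution yields the announced bound $\Vert u\Vert_{L^4}^4\le\frac{3}{\det A}\Vert u\Vert_{L^2}^4$.

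\textbf{Step 2: sharpness when $A=I_2$.} I would choose $u_n$ with all Fourier coefficients equal to $1$ on $\mathcal S_{\lambda_n}$, so that $\Vert u_n\Vert_{L^2}^2=N_n:=|\mathcal S_{\lambda_n}|$ and $|\alpha_k|^2=r(k)^2$. By the symmetry $m\mapsto-m$ of the integer circle, a pair $(m,n)$ with $m-n=k$ yields the distinct pair $(-n,-m)$ with the same difference, \emph{except} when $k=2m_0$ for some $m_0\in\mathcal S_{\lambda_n}$ (in which case necessarily $n=-m_0$ and $r(k)=1$). Counting thus gives
\[
\sum_{k\neq 0}r(k)^2 \;=\; 2N_n^2-3N_n,
\qquad
\frac{\Vert u_n\Vert_{L^4(\mathbb T^2)}^4}{\Vert u_n\Vert_{L^2(\mathbb T^2)}^4}=\frac{N_n^2+2N_n^2-3N_n}{N_n^2}=3-\frac{3}{N_n}.
\]
After renormalizing to ensure $\Vert u_n\Vert_{L^2}^4=1$, it remains to select a sequence $\lambda_n$ with $N_n\to\infty$, which is classical in the theory of sums of two squares (e.g.\ $\lambda_n=5^n$ yields $N_n=4(n+1)$, or one may take $\lambda_n$ to be a product of $n$ distinct primes congruent to $1\pmod 4$).

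\textbf{Main obstacle.} The analytical manipulations are routine once the counting bound $r_A(k)\le 2$ has been isolated; the genuine care needed is (i) to track the factors of $\det A$ correctly through the orthonormal basis and Plancherel identity, and (ii) to notice that the naive $r_A(k)\lesssim\sqrt\lambda$ obtained from lattice points in a disc can be replaced by the purely geometric line-meets-circle bound, which is exactly what produces the sharp constant $3$. The only non-analytic ingredient is the existence of an unbounded sequence $(N_n)$, which is standard number theory.
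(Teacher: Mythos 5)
Your proof is correct and follows essentially the same route as the paper: your bound $r_A(k)\le 2$ for $k\ne 0$ (a prescribed nonzero difference confines $m$ to a line meeting the circle in at most two points) is exactly the content of the paper's Parallelogram Lemma, repackaged in terms of representation numbers of differences rather than a classification of resonant quadruples, and your sharpness argument (equidistributed coefficients on $\mathcal S_{\lambda_n}$, the exact value $3-3/|\mathcal S_{\lambda_n}|$, and the choice $\lambda_n=5^n$ with $|\mathcal S_{\lambda_n}|=4(n+1)$) coincides with the paper's. No gaps.
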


Let us mention that the sharpness of the constant $C=3$ when $A=I_2$ will not play a role in the other results in this paper, but we take this opportunity to 
present its proof.

\subsection{The parallelogram lemma} 

Let $\lambda>0$ and $u\in L^2(\twotorus)$ be a function such that $\supp\widehat u\subset\mathcal S_{\lambda}(A)$, and whose Fourier coefficients will be denoted $(a_m)_m$ in the following for simplicity. Let us develop the $L^4$ norm we aim at estimating
\begin{align}\label{eq:ressum}
	\Vert u\Vert_{L^4(\twotorus)}^4
	& = \frac1{(\det A)^2}\int_{\twotorus}\Big\vert \sum_{m\in\mathcal S_{\lambda}(A)} e^{2i\pi m\cdot x} a_m \Big\vert^4\,\mathrm dx \\[5pt]
	& = \frac1{(\det A)^2}\int_{\twotorus} \bigg(\sum_{(m, \overline m)\in\mathcal S_{\lambda}(A)^2}e^{2i\pi(m-\overline m)\cdot x} a_m \overline{a_{\overline m}}\, \bigg)^2\,\mathrm dx. \nonumber \\[5pt]
	& = \frac1{\det A}\sum_{\substack{(m_1,m_2,\overline m_1,\overline m_2)\in\mathcal S_{\lambda}(A)^4 \\[2pt] m_1 - \overline m_1 + m_2 - \overline m_2 = 0}}a_{m_1}\overline{a_{\overline m_1}}a_{m_2}\overline{a_{\overline m_2}}. \nonumber
\end{align}
The above resonant sum can be precisely understood. This is the topic of the following result, which can be found 
 as \cite[Equation (72)]{KKW}, but which we will be proven again in the present paper (with quite different arguments) for the sake of completeness.

\begin{lem}[Parallelogram lemma] \label{lem:dia}
Let $m_1, \overline m_1, m_2,  \overline m_2 \in \mathbb R^2$ be so that
$$
|m_1| = |\overline m_1| = |m_2| = |\overline m_2|\quad\text{and}\quad m_1 -\overline m_1+m_2 -  \overline m_2 = 0.
$$
Then, we are in one of the following situations:
\begin{enumerate}
    \item[(i)] Parallelogram: $m_1 = \overline{m}_1$ and $m_2 = \overline{m}_2$.
    \item[(ii)] First degenerate case: $m_1 = - m_2$ and $\overline m_1 = - \overline m_2$.
    \item[(iii)] Second degenerate case: $m_1 = \overline m_2$ and $m_2 = \overline m_1$.
\end{enumerate}
\end{lem}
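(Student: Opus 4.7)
The plan is to reinterpret the two constraints geometrically: all four vectors lie on a common circle, and the sum condition says that $(m_1, m_2)$ and $(\overline m_1, \overline m_2)$ form two chords of that circle sharing the same midpoint. From there, a classical circle-chord argument singles out the three cases.

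More precisely, set $r := |m_1| = |\overline m_1| = |m_2| = |\overline m_2|$, so the four points lie on the circle $C_r \subset \mathbb R^2$ of radius $r$ centered at the origin. Rewrite the linear constraint as $m_1 + m_2 = \overline m_1 + \overline m_2$, and denote the common midpoint by
\[
    p := \tfrac{1}{2}(m_1 + m_2) = \tfrac{1}{2}(\overline m_1 + \overline m_2).
\]
I would then split into two cases according to whether $p = 0$ or $p \neq 0$.

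If $p = 0$, the definition of $p$ immediately yields $m_2 = -m_1$ and $\overline m_2 = -\overline m_1$, which is case (ii). If $p \neq 0$, I use the polarization identity: from $|m_1|^2 = |m_2|^2$ we get $(m_1 + m_2)\cdot(m_1 - m_2) = 0$, hence $m_1 - m_2$ is orthogonal to $p$, and the same argument gives $\overline m_1 - \overline m_2 \perp p$. Thus both chords $[m_1, m_2]$ and $[\overline m_1, \overline m_2]$ lie on the line $L$ passing through $p$ and orthogonal to $p$. But $L \cap C_r$ consists of exactly two points, symmetric about $p$, so these two points are precisely $\{m_1, m_2\} = \{\overline m_1, \overline m_2\}$ as sets. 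Matching elements then gives either $(m_1, m_2) = (\overline m_1, \overline m_2)$, i.e.\ case (i), or $(m_1, m_2) = (\overline m_2, \overline m_1)$, i.e.\ case (iii).

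I do not expect a serious obstacle here: the whole argument reduces to the elementary fact that a non-diametral chord of a circle is uniquely determined by its midpoint. The only mildly delicate point is remembering to isolate the degenerate case $p = 0$, where the chord is a diameter and is not uniquely determined by its midpoint, which is precisely why the extra alternative (ii) must appear in the statement.
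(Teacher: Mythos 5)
Your argument is correct, and it takes a genuinely different route from the paper. The paper builds the closed quadrilateral $P_1P_2P_3P_4$ with edge vectors $m_1, m_2, -\overline m_1, -\overline m_2$, observes that (when the vertices are pairwise distinct) equality of opposite side lengths forces it to be a parallelogram or an antiparallelogram, rules out the antiparallelogram because an equilateral one must have two coinciding vertices, and then enumerates the six possible vertex coincidences to recover cases (ii) and (iii). You instead read the hypotheses as saying that $(m_1,m_2)$ and $(\overline m_1,\overline m_2)$ are two chords of the circle $C_r$ with the same midpoint $p$, use the polarization identity to show both chords lie on the line through $p$ orthogonal to $p$, and conclude from the fact that a line meets a circle in at most two points. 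This replaces the parallelogram/antiparallelogram dichotomy and the six-case degenerate enumeration by the single split $p=0$ versus $p\neq 0$, and is arguably cleaner and easier to make fully rigorous. One small point to tidy up: when $p\neq 0$ the line $L$ need not meet $C_r$ in \emph{exactly} two points; if $|p|=r$ it is tangent and $L\cap C_r=\{p\}$, in which case all four vectors equal $p$ and you land in case (i) anyway (and implicitly you use $|p|<r$ to exclude $\overline m_1=\overline m_2=p$ when $m_1\neq m_2$). This is a one-line patch, not a gap in the approach.
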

\begin{proof}
Consider the four points
\begin{align*}
    &P_1 := (0,0), \\
    &P_2 := P_1 + m_1, \\
    &P_3 := P_2 + m_2, \\
    &P_4 := P_3 - \overline{m}_1, 
\end{align*}
and observe that, because $m_1 -\overline m_1+m_2 -  \overline m_2 = 0$,
$$
P_4 - \overline{m}_2 = P_1.
$$
\noindent\textbf{$\triangleright$ Case 1: non degenerate case}. Assume that the points $P_1, P_2, P_3, P_4$ are pairwise distinct. Then, the  quadrilateral $P_1P_2P_3P_4$ has edges of lengths $(|m_1|, |m_2|, |\overline{m}_1|, |\overline{m}_2|)$. As $|m_i| = |\overline m_i|$, it is therefore either a parallelogram (if it is a simple, \textit{i.e.}~non self-intersecting), or an antiparallelogram (if it is complex, \textit{i.e.}~self-intersecting). However, an antiparallelogram with four edges of same size has necessarily two equal vertices, \textit{i.e.}~is degenerate. Indeed, let us consider e.g.~an antiparallelogram as in Figure \hyperref[fig:antipara]{4}, with the additional property that its four sides are equal in length. Since $\vert m_1\vert = \vert\overline{m}_2\vert$ and $\vert m_2\vert = \vert\overline{m}_1\vert$, the triangles $P_1P_4P_2$ and $P_3P_4P_2$ are both isosceles, in $P_1$ and $P_3$ respectively. Since they share the side $P_4P_2$, this implies that the two points $P_1$ and $P_3$ are aligned with the middle of $P_4P_2$. Then, the fact that $\vert m_1\vert = \vert m_2\vert$ implies that $P_1$ = $P_3$, and the quadrilateral $P_1P_2P_3P_4$ is therefore degenerate. As we assumed that $P_1$, $P_2$, $P_3$, $P_4$ are pairwise distinct, it follows that the quadrilateral $P_1P_2P_3P_4$ is necessarily a parallelogram and therefore $m_1 = \overline{m}_1$ and  $m_2 = \overline{m}_2$.

\medskip

\noindent\textbf{$\triangleright$ Case 2: degenerate case}. Assume that there is $i_1\neq i_2$ so that $P_{i_1} = P_{i_2}$. There are six cases: 
\begin{enumerate}
    \item[1.] If  $P_1 = P_2$, then $m_1 = 0 $, from which $m_2 = \overline m_1 = \overline m_2 = 0$.
    \item[2.] If $P_1 = P_3$, then $m_1+m_2 = 0$, from which 
    $\overline m_1 + \overline m_2 = 0$ as well. 
    \item[3.] If  $P_1 = P_4$, then $\overline m_2 = 0$, from which $m_1 = m_2 = \overline m_1 = 0$.
    \item[4.] If  $P_2 = P_3$, then $m_2 = 0$, from which $m_1 = \overline m_1 = \overline m_2 = 0$.
    \item[5.] If  $P_2 = P_4$, then $m_2 - \overline m_1 = 0$ and $m_1 - \overline m_2 = 0$. 
    \item[6.] If $P_3=P_4$, then $\overline m_1 = 0$,  from which $m_1 = m_2 = \overline{m_2} = 0$.
\end{enumerate}
All the cases above but the second and the fifth enter the case of the parallelogram for which $m_1 = \overline{m}_1$ and $m_2 = \overline{m}_2$.
\end{proof}

\begin{figure}\label{fig:para}

\begin{tabular}{cc}
\begin{tikzpicture}[scale=0.7]         
  \coordinate (A1) at (0,0);           
  \coordinate (A2) at (2.828,0);           
  \coordinate (A3) at ($(A2)+(2,2)$);   
  \coordinate (A4) at ($(A1)+(2,2)$);   

  \draw[-Stealth]  (A1) -- (A2) node[midway, below] {$m_1$};
  \draw[-Stealth] (A2) -- (A3) node[midway, right] {\hspace{0.1pt} $m_2$}; 
  \draw[-Stealth] (A3) -- (A4) node[midway, above] {$- \overline{m_1}$};
   \draw[-Stealth] (A4) -- (A1) node[midway, left] {\hspace{-40pt} $- \overline{m_2}$};
   
   \node[below left] at (A1) {$P_1$};
   \node[below right] at (A2) {$P_2$};
   \node[above right] at (A3) {$P_3$};
   \node[above left] at (A4) {$P_4$};
         
\end{tikzpicture} 
&
\begin{tikzpicture}[scale=0.7]         
  \coordinate (A1) at (0,0);           
  \coordinate (A2) at ($(A1)+(2,3)$);           

  \draw[<->, >=Stealth]  (A1) -- (A2) node[midway, right] {$m_1 = \overline m_1$, $m_2 = \overline m_2 = 0$};
   
   \node[below] at (A1) {$P_1=P_4$};
   \node[below right] at (A2) {$P_2=P_3$};
         
\end{tikzpicture}
\\
\begin{tikzpicture}[scale=0.7]         
  \coordinate (A1) at (1,0);           
  \coordinate (A2) at ($(A1)+(3,-2)$);           
  \coordinate (A3) at ($(A2) + (3.391, 1.21)$);   

  \draw[<->, >=Stealth]  (A1) -- (A2) node[midway, left] {\hspace{-65pt} $m_1 =  -m_2$};
  \draw[<->, >=Stealth] (A2) -- (A3) node[midway, right] {\hspace{10pt} $\overline m_2 = -\overline m_1$}; 
   
   \node[below left] at (A1) {$P_2$};
   \node[below left] at (A2) {$P_1=P_3$};
   \node[above right] at (A3) {$P_4$};
         
\end{tikzpicture}
&
\begin{tikzpicture}[scale=0.7]         
  \coordinate (A1) at (2,0);           
  \coordinate (A2) at ($(A1)+(-2,3)$);           
  \coordinate (A3) at ($(A2) + (3.391, 1.21)$);   

  \draw[<->, >=Stealth]  (A1) -- (A2) node[midway, left] {$m_1 = \overline m_2$};
  \draw[<->, >=Stealth] (A2) -- (A3) node[midway, right] {\hspace{5pt} $m_2 = \overline m_1$}; 
   
   \node[below left] at (A1) {$P_1$};
   \node[above left] at (A2) {$P_2=P_4$};
   \node[above right] at (A3) {$P_3$};
         
\end{tikzpicture}
\end{tabular}
\caption{
Examples of the three cases in Lemma \ref{lem:dia}. On the top are two examples of the parallelogram case, corresponding to a non-degenerate parallelogram on the left, and a degenerate one on the right. Bottom left is an example of the first degenerate case, bottom right of the second degenerate case.}
\end{figure}
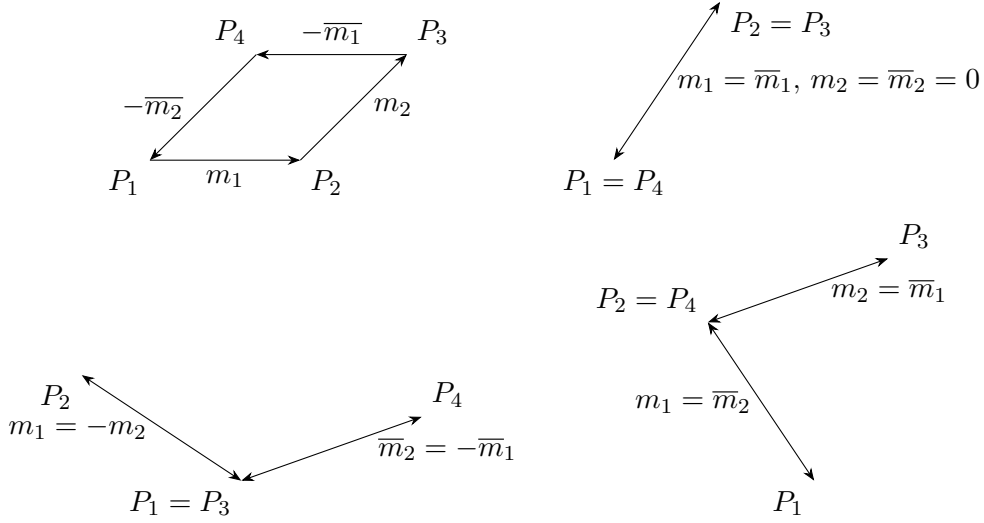

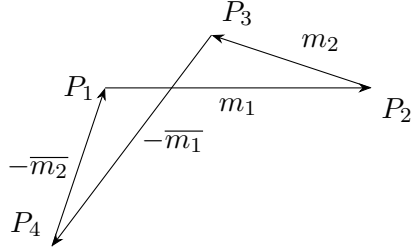
\begin{figure}\label{fig:antipara}

\begin{tikzpicture}[scale=0.7]         
  \coordinate (A1) at (0,0);           
  \coordinate (A2) at ($(A1)+(5,0)$);
  \coordinate (A3) at ($(A2)+(-3,1)$);         
  \coordinate (A4) at ($(A3)+(-3,-4)$);      

  \draw[-Stealth]  (A1) -- (A2) node[midway, below] {$m_1$};
  \draw[-Stealth] (A2) -- (A3) node[midway, above right] {$m_2$}; 
  \draw[-Stealth] (A3) -- (A4) node[midway, right] {$-\overline{m_1}$};
  \draw[-Stealth] (A4) -- (A1) node[midway, left] {$-\overline{m_2}$};
   
   \node[ left] at (A1) {$P_1$};
   \node[below right] at (A2) {$P_2$};
   \node[above right] at (A3) {$P_3$};
   \node[above left] at (A4) {$P_4$};
         
\end{tikzpicture}
\caption{A (non-equilateral) non-degenerate antiparallelogram. This case is excluded in the proof of Lemma \ref{lem:dia} as an equilateral parallelogram has necessary two equal vertices, \textit{i.e.}~is degenerate.}
\end{figure}

\subsection{The estimate} 

Let us now prove the first part of Theorem \ref{thm:zyg} as a consequence of the parallelogram lemma. Recalling the equality \eqref{eq:ressum}, we first get from the triangle inequality and Lemma \ref{lem:dia} that
\begin{multline*}
    \sum_{\substack{(m_1,m_2,\overline{m}_1,\overline{m}_2) \in \mathcal S_{\lambda}(A)^4 \\[2pt] m_1 -\overline m_1+m_2 -  \overline m_2 = 0}} \vert a_{m_1} \overline{a_{\overline m_1}}a_{m_2} \overline{a_{\overline m_2}}\vert
    \le \underbrace{\sum_{(m_1, m_2) \in  \mathcal S_{\lambda}(A)^2}  |a_{m_1} \overline{a_{m_1}}  a_{m_2} \overline{a_{ m_2} }|}_{=:\, S_1} \\[5pt]
    + \underbrace{\sum_{(m_1, \overline m_1) \in \mathcal S_{\lambda}(A)^2}  |a_{m_1} \overline{a_{\overline m_1}}  a_{-m_1} \overline{a_{-\overline m_1}}|}_{=:\, S_2}
    + \underbrace{\sum_{(m_1, m_2) \in \mathcal S_{\lambda}(A)^2}  |a_{m_1} \overline{a_{m_2}}  a_{m_2} \overline{a_{m_1} }|}_{=:\, S_3}.
\end{multline*}
The first and last sums $S_1$ and $S_3$ are nothing but $\Vert u\Vert_{L^2(\twotorus)}^4$, and the second sum is controlled by
\[
    S_2 = \bigg(\sum_{m\in\mathcal S_{\lambda}(A)} |a_m a_{-m}| \bigg)^2
    \leq \bigg( \sum_{m\in\mathcal S_{\lambda}(A)} \frac 12\big(|a_m|^2 + |a_{-m}|^2\big) \bigg)^2 =  \Vert u\Vert_{L^2(\twotorus)}^4.
\]
Dividing by $\det A$, we finally obtain
\begin{equation}\label{eq:zygl4}
    \Vert u\Vert^4_{L^4(\twotorus)}\le \frac{3}{\det A}\Vert u\Vert^4_{L^2(\twotorus)},
\end{equation}
as expected.

\subsection{Optimality} We now show that the constant $3$ is optimal in the estimate \eqref{eq:zygl4} when $A = I_2$. To that end, let us consider the sequence of functions $(u_n)_n$ in $L^2(\mathbb T^2)$ defined for every $n\in\mathbb N$ by
\[
    u_n(x) := \frac{1}{\sqrt{\vert\mathcal S_{\lambda_n}\vert}} \sum_{m\in\mathcal S_{\lambda_n}} e^{2i\pi m \cdot x},\quad x\in\mathbb T^2,
\]
where the $\lambda_n>0$ are positive integers that will be chosen later. Notice that $\Vert u_n\Vert_{L^2} = 1$. Moreover, it follows from \eqref{eq:ressum} that
\[
    \Vert u_n\Vert^4_{L^4} = \frac1{\vert\mathcal S_{\lambda_n}\vert^2}\sum_{\substack{(m_1,m_2,\overline{m}_1,\overline{m}_2)\in(\mathcal S_{\lambda_n})^4 \\[2pt] m_1 - \overline m_1 + m_2 - \overline m_2 = 0}}1
    = \frac{\vert X_n\vert}{\vert\mathcal S_{\lambda_n}\vert^2},
\]
where the set $X_n$ is defined by
\[
    X_n = \big\{(m_1,m_2,\overline{m}_1,\overline{m}_2)\in(\mathcal S_{\lambda_n})^4 : m_1 - \overline m_1 + m_2 - \overline m_2 = 0\big\}.
\]
Let us introduce three new sets $X^1_n$, $X^2_n$ and $X^3_n$, corresponding to the three cases in Lemma~\ref{lem:dia}, respectively given by
\begin{align*}
    & X^1_n = \big\{(m_1,m_2,\overline{m}_1,\overline{m}_2)\in(\mathcal S_{\lambda_n})^4 : m_1 = \overline{m}_1,\quad m_2 = \overline{m}_2\big\}, \\[5pt]
    & X^2_n = \big\{(m_1,m_2,\overline{m}_1,\overline{m}_2)\in(\mathcal S_{\lambda_n})^4 : m_1 = -m_2,\quad \overline{m}_1 = -\overline{m}_2\big\}, \\[5pt]
    & X^3_n = \big\{(m_1,m_2,\overline{m}_1,\overline{m}_2)\in(\mathcal S_{\lambda_n})^4 : m_1 = \overline{m}_2,\quad m_2 = \overline{m}_1\big\}.
\end{align*}
Observe that we have the following overlapping decomposition
\[
    X_n = X^1_n \cup X^2_n \cup X^3_n.
\]
Indeed, the direct inclusion follows from Lemma \ref{lem:dia}, while the converse one is immediate. The inclusion-exclusion principle then allows to write that
\[
    \Vert u_n\Vert^4_{L^4} = \frac{\vert X^1_n\vert + \vert X^2_n\vert + \vert X^3_n\vert - \vert X^1_n\cap X^2_n\vert - \vert X^1_n\cap X^3_n\vert - \vert X^2_n\cap X^3_n\vert + \vert X^1_n\cap X^2_n\cap X^3_n\vert}{\vert\mathcal S_{\lambda_n}\vert^2}.
\]
We now have to estimate the above cardinalities. First, 
it follows from 
the definition of the sets $X^i_n$ that
\[
    \vert X^i_n\vert = \vert\mathcal S_{\lambda_n}\vert^2,\quad i\in\{1,2,3\}.
\]
Since the two-intersections are explicitly given by
\begin{align*}
    & X^1_n\cap X^2_n = \big\{(m_1,-m_1,m_1,-m_1) : m_1\in\mathcal S_{\lambda_n}\big\}, \\[5pt]
    & X^1_n\cap X^3_n = \big\{(m_1,m_1,m_1,m_1) : m_1\in\mathcal S_{\lambda_n}\big\}, \\[5pt]
    & X^2_n\cap X^3_n = \big\{(m_1,-m_1,-m_1,m_1) : m_1\in\mathcal S_{\lambda_n}\big\},
\end{align*}
we also have
\[
    \vert X^i_n\cap X^j_n\vert = \vert\mathcal S_{\lambda_n}\vert,\quad i\ne j\in\{1,2,3\}.
\]
Finally,
\[
    X^1_n\cap X^2_n\cap X^3_n = \big\{(0,0,0,0)\big\} \cap  (S_{\lambda_n})^4 = \emptyset.
\]
As a consequence, the $L^4$ norm of the functions $u_n$ is given by
\begin{equation}\label{eq:normun}
    \Vert u_n\Vert^4_{L^4} = 3 - \frac{3}{\vert\mathcal S_{\lambda_n}\vert}.
\end{equation}
Moreover, a classical result in number theory, which can be found e.g.~in \cite[Section 16.10]{HW}, states that for every positive integer $N\in\mathbb N^*$ admitting the following prime number decomposition
\[
    N = 2^a\prod p_i^{b_i}\prod q_j^{c_j},
\]
where $a\in\mathbb N$, $b_i, c_j\in\mathbb N$ being almost all equal to $0$, $p_i\equiv 1\, \text{(mod $4$)}$ and $q_j\equiv 3\, \text{(mod $4$)}$ being prime numbers, then
\[
    \vert\mathcal S_{N}\vert = \begin{cases}
        4\prod(b_i+1) & \text{when all the $c_j$ are even,} \\[5pt]
        0 & \text{otherwise}.
    \end{cases}
\]
This result motivates to make the choice $\lambda_n = 5^n$, for which
\[
    \vert\mathcal S_{\lambda_n}\vert = 4(n+1) \underset{n\rightarrow+\infty}{\longrightarrow}+\infty.
\]
Plugging this expression into \eqref{eq:normun}, we obtain
\[
    \Vert u_n\Vert^4_{L^4} = 3 - \frac{3}{4(n+1)} \underset{n\rightarrow+\infty}{\longrightarrow} 3.
\]
Recalling that $\Vert u_n\Vert_{L^2} = 1$, this proves that the constant $3$ is optimal in Zygmund inequality~\eqref{eq:zygl4}.

\begin{rk} Notice that Cauchy-Schwarz inequality immediately implies that for every energy level $\lambda>0$ and for every function $u\in L^2(\mathbb T^2)$ such that $\supp\widehat{u}\subset\mathcal S_{\lambda}$,
\[
    \Vert u\Vert_{L^{\infty}(\mathbb T^2)}\le \vert\mathcal S_{\lambda}\vert^{\frac{1}{2}}\Vert u\Vert_{L^2(\mathbb T^2)}.
\]
By interpolating this estimate and Zygmund inequality from Theorem \ref{thm:zyg}, we therefore deduce that for every $p>4$,
\[
    \Vert u\Vert^p_{L^p(\mathbb T^2)}\le 3\vert\mathcal S_{\lambda}\vert^{\frac{p-4}{2}}\Vert u\Vert^p_{L^2(\mathbb T^2)}.
\]
However, this inequality is not as precise as results known in the literature in the same topic, among others in the case $p=6$ in \cite{BB, KKW}, obtained by more sophisticated approaches and technology.
\end{rk}

\section{Observability estimates on twisted rational tori}\label{sec:obstori}

This last section is devoted to the proof of observability estimates for Schr\"odinger equations posed on the twisted rational tori $\twotorus$ defined in Section \ref{ss:diamond}. By rational tori, we mean that we make the following assumption on the eigenvalues $\lambda_m>0$ of the Laplacian $-\Delta$ acting on $L^2(\twotorus)$, given by \eqref{eq:eigenvalues},
\begin{equation}\label{eq:rational}
    \exists\gamma>0, \forall m\in\mathbb Z^2_A,\quad \lambda_m\in\gamma\mathbb N.
\end{equation}
For example, when $A = I_2$ is the identity matrix, we have $\gamma = 4\pi^2$. Another interesting example in regards of the triangle $\mathcal T$ is the following one.

\begin{ex}\label{ex:mattorus} The torus $\mathbb T^2[\omega]$ defined in \eqref{eq:partitorus} is associated with the following matrix
\[
    A = 3\begin{pmatrix}
        1 & 1/2 \\[5pt]
        0 & \sqrt{3}/{2}
    \end{pmatrix}\in\GL_2(\mathbb R).
\]
Since the eigenvalues of the Laplacian $-\Delta$ on $L^2(\mathbb T^2[\omega])$ are given from \eqref{eq:eigenvalues} by
\[
    \lambda_m = \frac{16\pi^2}{27}(m_1^2 - m_1m_2 + m_2^2),\quad m = (m_1,m_2)\in\mathbb Z^2,
\]
the associated quantity $\gamma$ as in \eqref{eq:rational} is given by $\gamma = 16\pi^2/27$. Notice that these eigenvalues include the eigenvalues \eqref{eq:eigentri} of the Laplacian $-\Delta$ acting on $L^2(\mathcal T)$, which is not surprising due to the correspondance between the torus $\mathbb T^2[\omega]$ and the equilateral triangle $\mathcal T$ presented in Section \ref{subsec:tritotor}.
\end{ex}

In this setting, the goal of this section is to prove the following observability estimates.

\begin{thm}\label{thm:obs} Let $T>0$ be a positive time and $a\in L^2(\twotorus)\setminus\{0\}$ be a non-negative function. There exists a positive constant $C_{a,T}>0$ such that for every $u_0\in L^2(\twotorus)$,
\[
	\Vert u_0\Vert^2_{L^2(\twotorus)}\le C_{a,T}\int_0^T\int_{\twotorus}a(x)\big\vert(e^{it\Delta}u_0)(x)\big\vert^2\,\mathrm dx\,\mathrm dt.
\]
\end{thm}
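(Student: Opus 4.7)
The strategy is to adapt the semiclassical argument of Burq--Zworski \cite{BZ} to the twisted rational setting, the two new ingredients being the generalized Zygmund inequality of Theorem~\ref{thm:zyg} and a careful reduction along the closed geodesics of $\twotorus$. Arguing by contradiction, one obtains a sequence $(u_n)\subset L^2(\twotorus)$ with $\Vert u_n\Vert_{L^2}=1$ and $\int_0^T\!\int_{\twotorus}a\,\vert e^{it\Delta}u_n\vert^2\,\mathrm dx\,\mathrm dt\to 0$. The rationality assumption \eqref{eq:rational} makes $e^{it\Delta}$ periodic of period $2\pi/\gamma$ and, thanks to the spectral gap $\gamma$, cleanly decouples the spherical shells $\mathcal S_\lambda(A)$; a standard frequency-localization argument then reduces the problem to sequences spectrally concentrated on a single shell of growing radius $\lambda_n\to\infty$. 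At this point, Theorem~\ref{thm:zyg} is precisely what makes the subsequent analysis possible: by Cauchy--Schwarz and the Zygmund bound, one controls uniformly $\int a\,\vert u_n\vert^2\leq\Vert a\Vert_{L^2}\Vert u_n\Vert_{L^4}^2\lesssim\Vert a\Vert_{L^2}\Vert u_n\Vert_{L^2}^2$, so that defect measures can legitimately be extracted in spite of $a$ being only $L^2$.

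One then associates to $(u_n)$ a semiclassical defect measure $\mu$ on the cosphere bundle of $\twotorus$. Two standard facts follow: propagation of singularities for the free Schr\"odinger equation implies that $\mu$ is invariant under the geodesic flow $\Phi_t(x,\xi)=(x+t\xi,\xi)$, while the vanishing of the observability functional forces $\mu$ to be annihilated (after time-averaging) by $a\,dx$. By rationality, every orbit of $\Phi_t$ is periodic, so $\mu$ disintegrates as a sum indexed by rational directions $\xi_0\in(A^{-1})^*\mathbb Z^2\setminus\{0\}$, each piece $\mu_{\xi_0}$ being carried by the family of closed geodesics of direction $\xi_0$ and projecting onto a measure on the transversal one-dimensional subtorus to $\mathbb R\xi_0$. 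Testing the observability identity against functions constant along the closed geodesics in direction $\xi_0$ yields
\[
\int\overline{a}_{\xi_0}\,d\mu_{\xi_0}=0,
\]
where $\overline{a}_{\xi_0}$ denotes the average of $a$ along these closed geodesics, a non-negative, nontrivial $L^1$ function on the transversal whenever $a\not\equiv 0$.

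The final step is a one-dimensional unique continuation argument on each transversal torus: each $\mu_{\xi_0}$ is the defect measure of a sequence of squared moduli of trigonometric polynomials coming from the Fourier expansion of $u_n$ on frequencies perpendicular to $\xi_0$, and for such weak limits the support condition $\supp\mu_{\xi_0}\subset\{\overline{a}_{\xi_0}=0\}$ combined with the strict positivity of $\overline{a}_{\xi_0}$ on a set of positive measure forces $\mu_{\xi_0}\equiv 0$ (in the spirit of Nazarov--Tur\'an-type uniqueness). Summing over all rational directions yields $\mu\equiv 0$, contradicting $\Vert u_n\Vert_{L^2}=1$.

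The main obstacle is to set up the disintegration above in the twisted geometry. The rational directions of $\Phi_t$ on $\twotorus$ are generators of the dual lattice $(A^{-1})^*\mathbb Z^2$, which is in general neither orthogonal to nor aligned with $A\mathbb Z^2$; the transversal one-dimensional tori are therefore not canonically parametrized and their periods depend arithmetically on $A$. Matching the Fourier decomposition of $u_n$ on the spherical shells $\mathcal S_\lambda(A)$ with the averaging along closed-geodesic directions requires a change of variables tailored to $A$, and it is precisely at this step that the rectangular proof of \cite{BZ}, which relies on a plain partial integration in a coordinate direction, fails to apply directly and has to be genuinely extended using the twisted-torus Zygmund estimate of Theorem~\ref{thm:zyg}.
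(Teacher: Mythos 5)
Your overall architecture --- contradiction argument, extraction of a defect measure, concentration on rational directions, reduction to a one-dimensional problem --- is the same as the paper's, and your use of the Zygmund/Strichartz bound to justify testing the measure against an $L^2$ weight $a$ is exactly the role it plays in the paper. (One structural difference: the paper does not reduce to a single spherical shell but to frequency windows $\Pi_{h,\rho}u_0$ via Littlewood--Paley and the Bardos--Lebeau--Rauch uniqueness--compactness argument, which is what handles arbitrary $T>0$.) However, the proposal has genuine gaps at precisely the two points where the twisted rational structure enters. First, the assertion that ``by rationality, every orbit of $\Phi_t$ is periodic'' is false: hypothesis \eqref{eq:rational} constrains the eigenvalues, i.e.\ the quadratic form $A^{-1}(A^{-1})^*$, not the geodesic flow, and on any torus only countably many directions give closed geodesics. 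Disposing of the irrational directions requires the unique-ergodicity argument of \cite[Lemma 2.2]{BZ}, which your disintegration silently skips; moreover the closed-geodesic directions are $\zeta=Am/\vert Am\vert$ with $m\in\mathbb Z^2$, so they come from the lattice $A\mathbb Z^2$, not from the dual lattice $(A^{-1})^*\mathbb Z^2$ as you state.

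Second, and more seriously, the dimension reduction is not actually carried out, and the obstruction you would meet there is not the one you identify. After fixing a rational direction $\zeta_0$ one must (i) rotate it onto a lattice direction, which changes the lattice and forces one to enlarge it by a factor $q_0$ chosen, via \eqref{eq:rational}, so that the rotated functions remain periodic (condition \eqref{eq:rat_rot}); and (ii) take partial Fourier coefficients in the periodic direction, which yields a one-dimensional equation carrying a cross term $\xi\eta$ from the off-diagonal entry of $A^{-1}(A^{-1})^*$. This term is removed by conjugating with $e^{i\alpha kx+i\beta k^2t}$, and the conjugated function lives on a circle $\mathbb R/q\mathbb Z$ only because $\alpha\in\mathbb Q$ --- which is again exactly the rationality assumption. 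Your proposal attributes the resolution of the twisted-geometry difficulty to Theorem \ref{thm:zyg}; that estimate is needed for the absolute continuity of the spatial marginal of $\mu$ and for the final approximation argument, but it plays no role in the reduction itself. Finally, the endgame is not a Nazarov--Tur\'an-type uniqueness statement for the limit measure but the quantitative one-dimensional observability estimate of \cite[Lemma 2.4]{BZ} applied to the conjugated free flow.
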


\subsection{Strichartz estimates on twisted rational tori} As an almost direct consequence of Zygmund inequality stated in Theorem \ref{thm:zyg}, we get 
$L^4_xL^2_t$ Strichartz estimates without loss of derivatives on twisted rational tori, generalizing the now well-known rectangular case. 
\begin{prop}\label{prop:striinhomo} 
For every solution $u$ of the equation
$$
\begin{cases}
	i\partial_t u(t,x) + \Delta u(t,x) = F(t,x),\quad (t,x)\in\mathbb R\times\twotorus, \\
	u(0,\cdot) = u_0\in L^2(\twotorus),
\end{cases}
$$
the following estimate holds
\begin{equation*}
	\Vert u\Vert_{L^4(\twotorus,L^2(0, \frac{2\pi}{\gamma}))}
	\le\bigg(\frac{12\pi^2}{\gamma^2\det A}\bigg)^{\frac14}\big(\Vert u_0\Vert_{L^2(\twotorus)} + \Vert F\Vert_{L^1((0,\frac{2\pi}{\gamma}),L^2(\twotorus))}\big).
\end{equation*}
\end{prop}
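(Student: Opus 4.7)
The plan is to treat the homogeneous case first and deduce the full inhomogeneous estimate via Duhamel's formula. The key structural fact I will exploit is that the rationality assumption \eqref{eq:rational} makes the Schrödinger flow $(e^{it\Delta})$ periodic in time with period $T_0 := 2\pi/\gamma$, so that its time-Fourier expansion on $[0,T_0]$ has frequencies indexed by the distinct eigenvalues $\gamma k$, with projectors
\[
    P_k u_0 := \sum_{\substack{m\in\mathbb Z^2_A \\ \lambda_m = \gamma k}} a_m(u_0)\, e_m,\qquad k\in\mathbb N.
\]

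First I would handle the homogeneous case $F=0$. Writing $u(t,x)=e^{it\Delta}u_0(x)=\sum_k e^{-i\gamma k t}\,(P_k u_0)(x)$ and applying Parseval on $[0,T_0]$, one obtains
\[
    \int_0^{T_0}|u(t,x)|^2\,\mathrm dt = T_0\sum_k |P_k u_0(x)|^2.
\]
Hence $\|u\|_{L^4_xL^2_t(0,T_0)}^4 = T_0^2 \,\bigl\|\sum_k |P_k u_0|^2\bigr\|_{L^2_x}^2$. By Minkowski's inequality (triangle inequality in $L^2_x$), $\bigl\|\sum_k |P_k u_0|^2\bigr\|_{L^2_x}\le \sum_k \|P_k u_0\|_{L^4_x}^2$. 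Since $P_k u_0$ is spectrally localized on $\mathcal S_{\gamma k}(A)$, Theorem \ref{thm:zyg} yields
\[
    \|P_k u_0\|_{L^4_x}^2 \le \Bigl(\frac{3}{\det A}\Bigr)^{1/2}\|P_k u_0\|_{L^2_x}^2,
\]
and summing in $k$ using orthogonality of the spectral projectors gives $\sum_k\|P_k u_0\|_{L^2_x}^2=\|u_0\|_{L^2_x}^2$. Combining, I obtain
\[
    \|u\|_{L^4_xL^2_t(0,T_0)}\le T_0^{1/2}\Bigl(\frac{3}{\det A}\Bigr)^{1/4}\|u_0\|_{L^2}=\Bigl(\frac{12\pi^2}{\gamma^2\det A}\Bigr)^{1/4}\|u_0\|_{L^2},
\]
which is the homogeneous version of the desired bound.

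For the inhomogeneous case, I would write $u(t)=e^{it\Delta}u_0-i\int_0^t e^{i(t-s)\Delta}F(s)\,\mathrm ds$ and apply Minkowski's integral inequality in the ``outer'' $L^4_xL^2_t$ norm:
\[
    \Bigl\|\int_0^t e^{i(t-s)\Delta}F(s)\,\mathrm ds\Bigr\|_{L^4_xL^2_t(0,T_0)} \le \int_0^{T_0}\bigl\|\mathbbm 1_{s<t}\,e^{i(t-s)\Delta}F(s)\bigr\|_{L^4_xL^2_t(0,T_0)}\,\mathrm ds.
\]
For each fixed $s$, the indicator only shrinks the $L^2_t$ interval, and by $T_0$-periodicity of the flow the $L^2_t$ norm of $t\mapsto e^{i(t-s)\Delta}F(s)$ on $[0,T_0]$ equals the $L^2_t$ norm of $t\mapsto e^{it\Delta}F(s)$ on $[0,T_0]$. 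Applying the homogeneous estimate already obtained to the datum $F(s)$, and integrating in $s$, I get the inhomogeneous bound by $(12\pi^2/(\gamma^2\det A))^{1/4}\|F\|_{L^1_t L^2_x}$. Adding the homogeneous contribution yields the claimed inequality.

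The only delicate step is the handling of the indicator $\mathbbm 1_{s<t}$ when commuting the $L^2_t$ norm with the $s$-integral, together with the verification that the periodicity of the flow indeed allows to translate the $L^2_t$ interval without loss; everything else is a mechanical bookkeeping of constants.
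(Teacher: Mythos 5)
Your proof is correct and follows essentially the same route as the paper: Duhamel reduces to the homogeneous case, which is handled by time-orthogonality of the distinct eigenfrequencies $\gamma k$ on $[0,2\pi/\gamma]$, followed by the triangle/Cauchy--Schwarz step on $\sum_k|P_ku_0|^2$ and the Zygmund inequality of Theorem \ref{thm:zyg} applied to each spectral cluster. The paper leaves the Duhamel step as "straightforward''; your Minkowski-in-$s$ argument with the observation that the shift $t\mapsto t-s$ does not change the $L^2_t$ norm over a full period is the correct way to fill it in.
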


\begin{proof}
Using Duhamel formula, it is straightforward to deduce Proposition \ref{prop:striinhomo} from the corresponding homogeneous estimate: for any $u_0 \in L^2(\twotorus)$,
$$
\big\Vert e^{it\Delta} u_0 \big\Vert_{L^4(\twotorus,L^2(0, \frac{2\pi}{\gamma}))}	\le\bigg(\frac{12\pi^2}{\gamma^2\det A}\bigg)^{\frac14}\Vert u_0\Vert_{L^2(\twotorus)}.
$$
We will therefore show the above.
Developing the $L^4_x L^2_t$ norm we aim at estimating, we get
\[
	\big\Vert e^{it\Delta}u_0\big\Vert^4_{L^4_xL^2_t} 
	= \bigg(\frac{2\pi}{\gamma}\bigg)^2\int_{\twotorus} \bigg( \sum_{\lambda>0} |(\Pi_\lambda u_0) (x)|^2\bigg)^2 \,\mathrm dx,
\]
where $\Pi_\lambda$ is the projection on Fourier coefficients of size $\lambda$:
\[
    (\Pi_\lambda u_0)(x) := \frac1{\sqrt{\det A}}\sum_{m\in\mathcal S_{\lambda}(A)} a_m e^{2i\pi m\cdot x},\quad x\in\twotorus.
\]
Then, by applying Cauchy-Schwarz inequality and Zygmund inequality from Theorem~\ref{thm:zyg}, we get that
\begin{align*}
    \big\Vert e^{it\Delta}u_0\big\Vert^4_{L^4_xL^2_t} 
    & \le \bigg(\frac{2\pi}{\gamma}\bigg)^2\sum_{\lambda,\mu>0}\Vert\Pi_{\lambda}u_0\Vert^2_{L^4}\Vert\Pi_{\mu}u_0\Vert^2_{L^4} \\[5pt]
    & \le \bigg(\frac{2\pi}{\gamma}\bigg)^2\frac{3}{\det A}
    \sum_{\lambda,\mu>0}\Vert\Pi_{\lambda}u_0\Vert^2_{L^2}\Vert\Pi_{\mu}u_0\Vert^2_{L^2} \\[5pt]
    & = \frac{12\pi^2}{\gamma^2\det A}\Vert u_0\Vert^4_{L^2},
\end{align*}
as expected. 
\end{proof}

\begin{rk}\label{rk:striinhomo} 
Considering the particular case of the matrix $A$ in Example \ref{ex:mattorus} associated with the torus $\mathbb T^2[\omega]$, for which we recall that 
\[
	\det A = \frac{9\sqrt{3}}{2}\quad\text{and}\quad \gamma = \frac{16\pi^2}{27},
\]
and invoking the correspondence between $\mathbb T^2[\omega]$ and the equilateral triangle $\mathcal T$ presented in Section \ref{subsec:tritotor}, we deduce the Strichartz estimates \eqref{eq:inhomostritri} from Proposition \ref{prop:striinhomo}.
\end{rk}

\subsection{Observability}

In the setting of rectangular tori (rational or not), \textit{i.e.}~in the situation where $A$ is symmetric, it is now well-known how to derive the observability estimate in Theorem \ref{thm:obs} with the homogeneous Strichartz estimates from Proposition \ref{prop:striinhomo} at hand, see e.g.~\cite{BZ,LBM}. In this part, we will quickly recall the arguments already appearing in the rectangular case, and present in details what changes in the setting of twisted rational tori. In particular, the rationality assumption \eqref{eq:rational} arises in the dimension reduction, which was not the case for rectangular tori.

Thanks to a Littlewood--Paley decomposition and the Bardos--Lebeau--Rauch uniqueness-compactness argument originated from \cite{BLR} (a quantitative version of which can be found in \cite[Theorem 4]{BBZ}), it is sufficient to prove the following semiclassical observability estimate for every $0<h<h_0$,
\begin{equation}\label{eq:obstrunc}
    \Vert \Pi_{h,\rho}u_0\Vert^2_{L^2(\twotorus)}\le C_{a,T}\int_0^T\int_{\twotorus}
    a(x)\big\vert(e^{it\Delta}\Pi_{h,\rho}u_0)(x)\big\vert^2\,\mathrm dx\,\mathrm dt,
\end{equation}
the frequency localizing operators $\Pi_{h,\rho}$ being given by
\[
    \Pi_{h,\rho} = \chi\bigg(\frac{-h^2\Delta-1}{\rho}\bigg),
\]
where $\chi\in C^{\infty}_0((-1,1))$ is equal to $1$ near zero. Mutatis-mutandis, one can follow the strategy of \cite{BZ}. The estimate \eqref{eq:obstrunc} is proven by contradiction: assuming that the estimate fails along a subsequence $(u_n)_{n\geq 1}$, for $h_n, \rho_n \to 0$, one can extract a defect measure $\mu$ on $\mathbb R_t\times(T^*\twotorus)_{z, \zeta}$ from $(u_n)_{n\geq 1}$. Recall that the measure $\mu$ satisfies the following three properties
\begin{align} 
    & \mu((t_0,t_1)\times T^*\twotorus) = t_1-t_0, \label{eq:contradict_me} \\[5pt]
    & \Supp\mu\subset\big\{(t,z,\zeta)\in\mathbb R\times\twotorus\times\mathbb R^2 : \vert\zeta\vert = 1\big\}, \\
    & \partial_s\int_{\mathbb R}\int_{T^*\twotorus}\varphi(t)\Gamma(z+s\zeta,\zeta)\,\mathrm d\mu = 0,\quad \varphi\in C^0_c(\mathbb R_t),\, \Gamma\in C^{\infty}((T^*\twotorus)_{z, \zeta}) \label{eq:invflowgeo}.
\end{align}
Additionally, thanks to the Strichartz estimates from Proposition \ref{prop:striinhomo} with $F=0$, for any $\tau \geq 0$ there exists a function $m_\tau \in L^2(\twotorus)$ such that for all $f\in L^2(\twotorus)$,
\[
    \int_0^\tau \int_{T^* \twotorus} f(z)\,\mathrm d\mu(t, z, \zeta) = \int_{\twotorus} m_\tau(z) f(z)\,{\rm d}z.
\]
From the setup of the contradiction argument and the definition of defect measures,
$$
\int_{\twotorus} a(z) m_T(z) \, {\rm d}z = 0.
$$
We will show that $m_T$ is identically zero, which is in contradiction with (\ref{eq:contradict_me}). The idea is that $\mu$ has most of its mass on rational directions. But the flow from such directions is periodic, which permits to perform a reduction to a one-dimensional torus, for which the observability properties are well known. Let us mention that in our geometric context, \textit{i.e.}~in the twisted torus $\twotorus$, one has to change the definition of rational directions compared to was is done in \cite{BZ}. Precisely, we consider
\[
    W^n = \bigg\{(z,\zeta)\in T^*\twotorus : \zeta = \frac{Am}{\vert Am\vert},\,m\in\mathbb Z^2,\,\max(\vert m_1\vert,\vert m_2\vert)\le n,\,\gcd(m_1,m_2) = 1\bigg\}.
\]
By using the same arguments as in \cite[Lemma 2.2]{BZ},
which rely on the unique ergodicity of the flow $z\mapsto z+s\zeta$, we get that
\[
    \forall\varepsilon>0,\;\exists n\geq0,\quad\widetilde\mu_T(W^n)<\varepsilon,
\]
where $\widetilde\mu_T$ is the defect measure $\mu$ integrated in time on $[0,T]$, \textit{i.e.}
\[
    \int_{T^*\twotorus}\Gamma(z,\zeta)\,\mathrm d\widetilde{\mu}_T(z,\zeta) 
    = \int_0^T\int_{\twotorus}\Gamma(z,\zeta)\,\mathrm d\mu(t,z,\zeta),\quad \Gamma\in C^{\infty}_c(T^*\twotorus).
\]
Let us fix $n_0\geq0$ large enough such that
\[
    \widetilde\mu_T(T^*\twotorus\setminus W^{n_0})<T,
\]
and also
\[
    \zeta_0\in\big\{\zeta\in\mathbb R^2 : \exists z\in\twotorus,\,(z,\zeta)\in\supp(\widetilde{\mu_T}\vert_{W^{n_0}})\big\}.
\]
Mimicking \cite[Lemma 2.3]{BZ}, we have for some $F\in L^2(\twotorus)\setminus\{0\}$ such that $F\geq0$,
\begin{equation}\label{eq:reducdim}
    \widetilde{\mu_T}\big\vert_{\twotorus\times\{\zeta_0\}} = F\otimes\delta_{\zeta = \zeta_0}.
\end{equation}

We now set
\[
    e_1 := {A\begin{pmatrix}
        1 \\
        0
    \end{pmatrix}}
    \quad\text{and}\quad
   e_2 := {A\begin{pmatrix}
        0 \\
        1
    \end{pmatrix}},
\]
and show that up to changing the torus $\twotorus$, we can actually assume that $\zeta_0= \frac{e_2}{|e_2|}$. Indeed, it suffices to consider the following rotation, where the vectors $\zeta_0$ and $e_2$ are considered as elements of $\mathbb C^*$
\[
    \phi(z) := \zeta_0 |e_2|e_2^{-1}z,\quad z\in\twotorus,
\]
to work with the new functions $\widetilde u_n := u_n\circ\phi$, and  to change the torus $\twotorus$ to
\[
    \mathbb T^2_{rA} := \mathbb R^2/(rA\mathbb Z^2),
\]
where $r := q_0 |e_2|^{-1}\vert Am\vert$ with $q_0 \in \mathbb N^*$ chosen later, and $m\in\mathbb Z^2$ given by
\[ 
    \zeta_0 = \frac{Am}{\vert Am\vert}\in W^{m_0}.
\]
 Let us show that for a well chosen $q_0 \in \mathbb N^*$, the functions $\widetilde u_n$ are well-defined on this new torus $\mathbb T^2_{rA}$. To that end, let us consider a point $z\in\mathbb T^2_{rA}$ and an integer $p\in\mathbb Z$. On the one hand, the periodicity in the direction $re_2$ is immediate: indeed, since $u_n\in L^2(\twotorus)$ and $pq_0 Am \in A\mathbb Z^2$,
\begin{align*}
    \widetilde u_n(z + pre_2) & = u_n(\zeta_0 |e_2|e_2^{-1}z + \zeta_0 |e_2|e_2^{-1}pre_2)
    = u_n(\zeta_0 |e_2|e_2^{-1}z + pq_0Am) \\[5pt]
    & = u_n(\zeta_0 |e_2|e_2^{-1}z) = \widetilde u_n(z).
\end{align*}
On the other hand, the periodicity in the direction $re_1$ will be a consequence of the rationality assumption \eqref{eq:rational} for a good choice of $q_0 \in \mathbb N^*$. Indeed
\begin{align*}
 \widetilde u_n(z + pre_1) &=   u_n(\zeta_0 |e_2|e_2^{-1}z + \zeta_0 |e_2|e_2^{-1}pre_1) = u_n(\zeta_0 |e_2|e_2^{-1}z + pq_0 Am e_2^{-1}e_1) \\[5pt]
 &= u_n(\zeta_0 |e_2|e_2^{-1}z + pq_0 m_1 e_2^{-1}e_1^2 + pq_0 m_2 e_1) =  u_n(\zeta_0 |e_2|e_2^{-1}z + pq_0 m_1 e_2^{-1}e_1^2),
\end{align*}
where we denoted $m=(m_1,m_2)$ and used the periodicity of $u_n$ in the direction $e_1$ in the second line. To conclude, we will show that we can choose $q_0 \in \mathbb N^*$ so that
\begin{equation} \label{eq:rat_rot}
q_0 e_2 ^{-1} e_1^2 \in \mathbb Z e_1 + \mathbb Z e_2,
\end{equation}
which gives the periodicity of $\widetilde u_n$ in the direction $re_1$ thanks to the above and the fact that $u_n \in L^2(\mathbb T_A)$.
To this effect, denote
$$
A^{-1} := 
\begin{pmatrix}
a & b \\ c & d
\end{pmatrix}.
$$
Working with the complex representation, a direct computation shows
$$
e_2 ^{-1} e_1^2 = \frac{1}{(a^2 + b^2)(ad-bc)} \begin{pmatrix}
-d^2b + c^2b -2adc\\ -d^2a+c^2a +2bcd
\end{pmatrix}.
$$
We now use the change of base formula 
\[
    \alpha \begin{pmatrix}
        1 \\
        0
    \end{pmatrix} + \beta \begin{pmatrix}
        0 \\
        1
    \end{pmatrix} = Xe_1 + Ye_2\ \Longleftrightarrow\ \begin{pmatrix}
        X \\
        Y
    \end{pmatrix} = A^{-1}\begin{pmatrix}
        \alpha \\
        \beta
    \end{pmatrix},
\]
to get
$$
e_2 ^{-1} e_1^2 = - \frac{1}{a^2+b^2}\big( 2(ac + bd)e_1 + (c^2+d^2)e_2 \big).
$$
We recognize the coefficients of $A^{-1}(A^{-1})^*$:
$$
A^{-1}(A^{-1})^* = 
\begin{pmatrix}
a^2 + b^2 & ac+bd\\  ac+bd & c^2 + d^2
\end{pmatrix}.
$$
To relate these coefficients to the rationality assumption \eqref{eq:rational},
it will be useful to relabel the eigenvalues $\lambda_m$ of the Laplace operator $-\Delta$ acting on $L^2(\twotorus)$, by stating that they are given by
\[
	\lambda_{n_1,n_2} = \left\vert 2\pi(A^{-1})^*\begin{pmatrix}
        n_1 \\
        n_2
    \end{pmatrix}\right\vert^2,\quad \begin{pmatrix}
        n_1 \\
        n_2
    \end{pmatrix}\in\mathbb Z^2.
\] 
With this new terminology,
\begin{equation} \label{eq:quad_form}
A^{-1}(A^{-1})^* = \frac1{16\pi^4}
\begin{pmatrix}
\lambda_{1,0} & \frac 12 (\lambda_{1,1} - \lambda_{1,0}-\lambda_{0,1})\\ \frac 12 (\lambda_{1,1} - \lambda_{1,0}-\lambda_{0,1}) & \lambda_{0,1}
\end{pmatrix}.
\end{equation}
It follows that, under the rationality assumption \eqref{eq:rational},
$$
e_2 ^{-1} e_1^2 = \frac{p_1}{q_1} e_1 + \frac{p_2}{q_2} e_2, \quad (p_1, q_1), (p_2, q_2) \in \mathbb Z \times \mathbb N^*.
$$
Taking $q_0 := \operatorname{lcm}(q_1, q_2)$, we obtain (\ref{eq:rat_rot}), and hence $\widetilde u_n \in  L^2(\mathbb T^2_{rA})$.
Observe, crucially, that this modification of torus does not change the rationality assumption \eqref{eq:rational}, which is still valid on $\mathbb T^2_{rA}$; and that $\widetilde u_n$ is still solution of the Schr\"odinger equation because
 the Laplacian $\Delta$ commutes with rotations.
 We now go back to the notations $u_n$ and $\mathbb T^2_A$ and assume that $\zeta_0= \frac{e_2}{|e_2|}$.

We are finally in position to proceed to the dimension reduction. By invariance of the defect measure $\mu$ with respect to the geodesic flow (property \eqref{eq:invflowgeo}) and the decomposition \eqref{eq:reducdim}, we have
\[
    \widetilde{\mu_T}\big\vert_{{\mathbb T}^2_{A}\times\{\frac{e_2}{|e_2|}\}} = g(x)\,\mathrm dx\mathrm dy\otimes\delta_0(\xi)\otimes\delta_1(\eta),
\]
where we work with the coordinates $(x,y)$ relative to the basis $(e_1,e_2)$. 
To perform explicitly the dimension reduction, we write the Laplacian in this basis. Precisely, let us make the following change of function
\[
	v_n(t,x,y) := u_n\left(t,A\begin{pmatrix}
        x \\
        y
    \end{pmatrix}\right),\quad (t,x,y)\in\mathbb R\times\mathbb T^2.
\]
A straightforward computation shows that the functions $v_n$ are solutions of the following twisted Schr\"odinger equation
\[
	i\partial_tv_n + \divv(A^{-1}(A^*)^{-1}\nabla v_n) = 0\quad\text{in $\mathbb R\times\mathbb T^2$.}
\]
Let $\psi_A$ be the quadratic form associated with $A^{-1}(A^*)^{-1}$:
$$
\psi_A(\Xi) := A^{-1}(A^*)^{-1} \Xi^* \cdot \Xi^*, \quad \Xi \in \mathbb R^2.
$$
Denoting by $v^k_n(t,\cdot)$ the partial Fourier coefficients of the functions $v_n(t,\cdot)$ with respect to the variable $y\in\mathbb T$, with $k\in 2\pi\mathbb Z$, we deduce by taking the Fourier transform in the above equation that the functions $v^k_n$ satisfy
\begin{equation}\label{eq:eqnfour}
	i\partial_tv^k_n + \psi_A(\partial_x, -ik)v^k_n = 0\quad\text{in $\mathbb R\times\mathbb T$.}
\end{equation}
We will now show that, under the rationality assumption \eqref{eq:rational}, $v^k_n$ is in fact solution to a straight one-dimensional Schr\"odinger equation up to conjugation by an oscillating factor.
To this end, let us perform another change of function and consider
\begin{equation}\label{eq:w_conj}
	w^k_n(t,x) := v^k_n(t,x)e^{i\alpha kx + i\beta k^2t},\quad (t,x)\in\mathbb R\times(\mathbb R/\mu\mathbb Z),
\end{equation}
where we aim at choosing the constants $\alpha,\beta, \Lambda\in\mathbb R$ and $\mu \in \mathbb N^*$ so that $w^k_n$ satisfies the following free Schr\"odinger equation 
\begin{equation} \label{eq:conj_free}
	i\partial_t w^k_n + \Lambda\partial^2_x w^k_n = 0\quad\text{in $\mathbb R\times(\mathbb R/\mu\mathbb Z)$}.
\end{equation}
Observe that, from (\ref{eq:quad_form}), the quadratic form $\psi_A$ is given by
$$
\psi_A(\xi,\eta) = \frac1{4\pi^2}\big(\lambda_{1,0}\xi^2 + (\lambda_{1,1} - \lambda_{0,1} - \lambda_{1,0}) \xi\eta +\lambda_{0,1}\eta^2\big),\quad(\xi,\eta)\in\mathbb R^2.
$$
Using this expression for $\psi_A$, injecting (\ref{eq:w_conj}) into (\ref{eq:eqnfour}) yields to, in order for $w_n^k$ to be solution to (\ref{eq:conj_free})
\begin{align*}
	\Lambda = \frac{\lambda_{1,0}}{4\pi^2},\quad \alpha = \frac{\lambda_{0,1} + \lambda_{1,0} - \lambda_{1,1}}{2\lambda_{1,0}}
	\quad\text{and}\quad\beta = \frac{\lambda_{0,1}}{4\pi^2} - \frac{(\lambda_{0,1}+\lambda_{1,0}-\lambda_{1,1})^2}{16\pi^2\lambda_{1,0}}.
\end{align*}
At that point, it is fundamental to notice that $\alpha\in\mathbb Q$ as a consequence of the rationality assumption \eqref{eq:rational} on the twisted torus $\twotorus$. Therefore, $\alpha = p/q$ with $(p,q) \in \mathbb Z \times \mathbb N^*$, and we take $\mu := q$. Having fixed such $\Lambda$, $\alpha$, $\beta$, $\mu$, $w_n^k$ is $\mu=q$--periodic and hence well defined on $\mathbb R\times(\mathbb R/\mu\mathbb Z)$, and verifies (\ref{eq:conj_free}).
Before going on, let us comment that in the rectangular case, \textit{i.e.}~when $A$ is symmetric, up to a rotation and a change of function, the corresponding quadratic form has no cross-term and so $\alpha = 0$, and no rationality assumption was needed at this point.
Coming back to $v_n^k$, we get
$$
v_n^k(t,x) = e^{-i\alpha k x} e^{i\Lambda t \partial_x^2}\big( e^{i\alpha k x} v_n^k(0) \big) e^{-i\beta k^2 t},
$$
where $(e^{i\Lambda t \partial_x^2})_{t\in\mathbb R}$ denotes the Schr\"odinger flow on $\mathbb R/\mu\mathbb Z$. We can now write
$$
    (e^{it\Delta}v_n)(x,y) = \sum_{k\in2\pi\mathbb Z} e^{-i\alpha k x} e^{i\Lambda t \partial_x^2}\big( e^{i\alpha k x} v_n^k(0) \big) e^{-i\beta k^2 t} e^{iky}.
$$
Using the the one-dimensional observability result \cite[Lemma 2.4]{BZ}, combined with an approximation argument involving
the homogeneous estimate from Proposition \ref{prop:striinhomo}, see e.g. \cite[pp. 337 - 338]{BZ}, then permits to get a contradiction and to conclude the proof of Theorem \ref{thm:obs}.

\subsection{Conclusion} As observed in the end of Section \ref{sec:til}, Theorem \ref{thm:obs} combined with the correspondence between the equilateral triangle $\mathcal T$ and the twisted torus $\tori$ stated as Corollary \ref{cor:red_twist}, give Theorem \ref{thm:obstriangle}.
\subsection*{Acknowledgments} The authors warmly thank A. Bailleul for pointing out the reference \cite{HW} to them. P. A. is partially supported by the ANR project CHAT ANR-24-CE40-5470.

\end{document}